\providecommand{\U}[1]{\protect\rule{.1in}{.1in}}
\providecommand{\U}[1]{\protect\rule{.1in}{.1in}}
\providecommand{\U}[1]{\protect\rule{.1in}{.1in}}
\providecommand{\U}[1]{\protect\rule{.1in}{.1in}}
\providecommand{\U}[1]{\protect\rule{.1in}{.1in}}
\newcommand{\ulambda}{{\boldsymbol{\lambda}}}
\newcommand{\umu}{{\boldsymbol{\mu}}}
\newcommand{\Uglov}[2]{{\Phi}^{#1}_{#2}}
\newcommand{\uemptyset }{{\boldsymbol{\emptyset}}}
\newtheorem{Th}{Theorem}[subsection]
\newtheorem{lemma}[Th]{Lemma}
\newtheorem{Cor}[Th]{Corollary}
\newtheorem{Prop}[Th]{Proposition}
\theoremstyle{remark}
\newtheorem{Rem}[Th]{Remark}{\rmfamily}
\theoremstyle{definition}
{\rmfamily}
\newtheorem{exa}[Th]{Example}{\rmfamily}
\newtheorem{abs}[Th]{\bfseries}
\newcommand\blfootnote[1]{%
  \begingroup
  \addtocounter{footnote}{-1}%
  \endgroup
}
\begin{document}

\title{Uglov bipartitions and extended Young diagrams}
\author{Nicolas Jacon}
\maketitle
\date{}
\blfootnote{\textup{2010} \textit{Mathematics Subject Classification}: \textup{20C08,05E10,17B37}} 
\begin{abstract}
We study the class of Uglov bipartitions and prove a generalization of a conjecture by Dipper, James and Murphy. 
 We give two  consequences concerning the computation of canonical bases in affine type $A$ and the description of decomposition matrices for Hecke algebras of type $B_n$ in arbitrary characteristic. 

\end{abstract}

\section{Introduction}

Uglov bipartitions are a class of combinatorial objects which have first be defined  in the context of the representation theory of quantum groups. More precisely, in the case of the affine special linear group $\widehat{\mathfrak{sl}}_e$, these pairs of integer partitions  are known to 
naturally label the crystal graph of the irreducible highest weight modules  (of level two) and thus their canonical bases.   Since their introduction by Uglov in \cite{Ug},
 they have appeared  in various  (but connected) situations: 
 \begin{itemize} 
 \item the representation theory of Cherednik algebras \cite{Bo,CGG} (as the bipartitions indexing the standard modules which are not killed by the KZ functor in type $B_n$), 
 \item the representation theory of Hecke algebras \cite{GJ} (as the bipartitions labelling the so called canonical basic sets in characteristic $0$),
\item  the Harish-Chandra theory for unitary groups \cite{GHJ} (as the bipartitions labelling certain weak Harish-Chandra series).
\end{itemize}

The definition of these bipartitions depends on the choice of a pair of integers ${\bf s}=(s_1,s_2)\in \mathbb{Z}^2$. If ${\bf s}\in \mathbb{Z}^2$ and ${\bf s}'\in \mathbb{Z}^2$
 are in the same orbit modulo an action of an extended affine symmetric group, the associated classes of bipartitions are in bijection. For a certain choice of 
${\bf s}\in \mathbb{Z}^2$, the associated bipartitions are known as FLOTW bipartitions (a special case of ``cylindric bipartitions''), for another choice (called asymptotic),  the associated bipartitions are known as Kleshchev bipartitions. Both types of bipartitions have been extensively studied in recent years. In the general case, 
even if these bipartitions have a nice and relatively easy recursive definition (see \S \ref{cb}), it can be difficult to 
 characterize them explicitly or to study their properties. In \cite{J}, a new combinatorial (but still recursive) definition has been given (it concerns in fact the more general class of Uglov multipartitions).
  As a consequence, an old conjecture by Dipper, James and Murphy has been deduced but only for the class of Kleshchev multipartitions (the papers \cite{H1,H2} consider special cases of this conjecture).  
  This result shows that the Kleshchev multipartitions may be easily obtained as the maximal elements with respect to the  lexicographic order on bipartitions in certain combinatorial expressions.

 The aim of this note is to continue the work of \cite{J} and to obtain a general proof of the Dipper-James-Murphy conjecture for the whole class of Uglov bipartitions (see Theorem \ref{djmc} or its reformulation in Corollary \ref{djmco}). To do this, we use the fact that 
 this conjecture may be easily proved for the class of FLOTW bipartitions   and we use the bijections between classes of Uglov bipartitions that we have already studied and defined in a number of papers.   We in particular obtain a different proof of the conjecture for the class of Kleshchev bipartitions than the one presented in \cite{AJ}.  We note that, even if some of the results of \cite{J} are used in this paper, our result is essentially independant of \cite{J}. In particular, it does not use the notion of staggered sequence which is used 
  to prove the conjecture for the Kleshchev multipartitions in this paper. 
 One of the main interest of this conjecture lies  in its application. We here give two consequences of this result. The first one (which has been already mentioned in \cite{J}) concerns the computation of canonical bases for irreducible highest weight modules fo  $\widehat{\mathfrak{sl}}_e$. The second one concerns the form of the decomposition matrices for Hecke algebras of type $B_n$ in arbitrary characteristic. Thanks to our main result,  we  give an elementary proof for the existence of canonical basic sets for these algebras. Recently, 
  such a result has been also obtained by C. Bowman using the theory of Cherednik algebras \cite{Bo} (even in the wider context of Ariki-Koike algebras).  In general, this  was previously only known  assuming the validity of certain Lusztig's conjectures  on Hecke algebras with unequal parameters (see \cite{GJb}).

\section{Several definitions}
In this part, we recall several combinatorial notions concerning Young diagrams, most of them can be already found in \cite{J}. 

\subsection{Extended Nodes of a bipartition}
 A {\it partition} $\lambda$ of rank $n\in \mathbb{Z}_{>0}$ is a sequence  of non negative and non increasing integers 
 $\lambda=(\lambda_1,\ldots,\lambda_r)$ such that  we have $|\lambda|:=\sum_{1\leq i\leq r} \lambda_i =n$. The unique partition of rank $0$, the empty partition, 
  is denoted by $\emptyset$. We denote by $\ell (\lambda)$ the minimal integer  
such that $\lambda_{\ell (\lambda)}=0$ with the convention that $\ell(\emptyset)=0$. 
 We will be interested here on the set of bipartitions of rank $n$:
  $$\mathcal{P}^2 (n):=\{ (\lambda^1,\lambda^2)\ |\    | \lambda^1 |+| \lambda^2 |=n\}.$$
 The {\it nodes} of the bipartition $\ulambda$ are the elements 
$(a,b,c)$ where $c\in  \{1,2\}$, $a\in \{1,\ldots,\ell(\lambda^c)\}$, $b\in \{1,\ldots,\lambda_a^c\}$. The set of all nodes of $\ulambda$ is  denoted by 
$\mathcal{Y} (\ulambda)$. It is called the {\it Young diagram} of $\ulambda$. 
 The {\it extended nodes} of the bipartition $\ulambda$ are the 
 following elements of $\mathbb{Z}_{\geq 0} \times \mathbb{Z}_{\geq 0}\times \{1,2\}$:
\begin{enumerate}
\item the elements of $\mathcal{Y} (\ulambda)$,
\item the elements of the form $(0,b,c)$ where $b>\lambda^c_1$ and $c\in  \{1,2\}$,
\item the elements of the form  $(a,0,c)$ where $a>\ell (\lambda^c)$ and $c\in \{1,2\}$.
\end{enumerate}
The set of all extended nodes of $\ulambda$ is called the {\it extended Young diagram} of $\ulambda$ and it is denoted by $\mathcal{Y}^{\textrm{ext}} (\ulambda)$. It thus contains the Young diagram of $\ulambda$. One can represent it as a collection  of boxes. Each box then corresponds to an extended node of the bipartition as in the following example. 
  \begin{exa}
  We consider the bipartition $(3.3.1,2.1)$ of $n=10$. The  extended Young diagram is given as follows. 
  
\vspace{1cm}  
  
  \centerline{
 \Bigg(\  \ \ \ \ytableausetup
{mathmode}\begin{ytableau}
\none  &  \none  & \none  & \none  & \ & \ &\ & \none[\dots]& \none[\dots] \\
 \none & \bullet  & \bullet  & \bullet \\
\none & \bullet & \bullet & \bullet \\
\none & \bullet \\  
\  \\
\  \\
\  \\
\none[\vdots] \\
 \none[\vdots] \\
\end{ytableau},
\begin{ytableau}
\none  &  \none  & \none   & \ & \ &\ & \none[\dots]& \none[\dots] \\
\none  &  \bullet & \bullet  \\
\none & \bullet \\  
\  \\
\  \\
\  \\
\none[\vdots] \\
 \none[\vdots] \\
\end{ytableau}\  \ \ \  \Bigg)}
 The boxes containing a bullet correspond to the boxes of the usual Young diagram.
 \end{exa}

Let  $e\in \mathbb{Z}_{>1}\sqcup\{\infty\}$.  We now fix  ${\bf s}=(s_1,s_2)\in \mathbb{Z}^2$.  One can attach to each extended node $\gamma=(a,b,c)$ of the extended Young diagram its {\it content} (depending  on the choice of ${\bf s}$):  
$$\operatorname{cont} (\gamma)=b-a+s_c\in \mathbb{Z}.$$
By definition, the {\it residue} (depending  on the choice of ${\bf s}$ and $e$) $\operatorname{res} (\gamma)$ of the extended node $\gamma$ is the content modulo $e$ if $e$ is finite and the content if $e=\infty$. Throughout the paper, we set $I:= \mathbb{Z}/e\mathbb{Z}$ (which will be identified with $\{0,\ldots,e-1\}$) if $e$ is finite 
 and $I:=\mathbb{Z}$ if $e=\infty$. 
If $\operatorname{res} (\gamma)=\mathfrak{j}$ then we say that $\gamma$ is a {\it (extended) $\mathfrak{j}$-node.}

The {\it boundary} of the extended Young diagram is by definition given by the extended nodes  $(a,b,c)\in \mathcal{Y}^{\text{ext}} (\ulambda)$ such that:
\begin{itemize}
\item  $(a,b+1,c)$ is not in $\mathcal{Y}^{\textrm{ext}} (\ulambda)$.  Such nodes constitute the  {\it vertical boundary} of the extended Young diagram.
\item  $(a+1,b,c)$ is not in $\mathcal{Y}^{\textrm{ext}} (\ulambda)$.  Such nodes constitute the  {\it horizontal boundary} of the extended Young diagram.
\end{itemize}
The extended nodes which are in the vertical or horizontal boundary without being in the Young diagram are called {\it virtual}.

A node $\gamma=(a,b,c)$ of $\mathcal{Y} (\ulambda)$ is said to be {\it removable} for $\ulambda$  if  
 $\mathcal{Y} (\ulambda)\setminus \{\gamma\}$ is the Young tableau 
  of a bipartition $\umu$.  If $\gamma=(a,b,c)\in \mathbb{Z}_{> 0} \times \mathbb{Z}_{> 0}\times \{1,\ldots, l\}$ is such that $\mathcal{Y} (\ulambda)\sqcup \{\gamma\}$ is the Young tableau of 
   a bipartition $\umu$ then it is said to be {\it addable} for $\ulambda$.  
  
The intersection between the vertical and the horizontal boundary is given by the set of  removable nodes. We see also that  there always exists one unique extended node in a fixed component with a given content which is either addable, either in the boundary of $\ulambda$ . We will denote by $\mathcal{E}_{\mathfrak{j}} (\ulambda)$ the set consisting of:
\begin{itemize}
\item addable $\mathfrak{j}$-nodes of $\ulambda$, we say that these nodes are  of {\it nature} $A$. 
\item  extended $\mathfrak{j}$-nodes of the boundary of  $\ulambda$. Such node may be 
 either removable (we then say that they are of nature $R$), either in the vertical boundary without being removable (we say that they are  of nature $B_v$) or in the horizontal boundary without being removable (of nature $B_h$)
\end{itemize}

Given the nature of a node of content $j$ in a component $c\in \{1,2\}$ of a bipartition, there is always only two possibilities for the nature 
 of the nodes with content $j-1$ and $j+1$ in the same component,  in respectively  $\mathcal{E}_{\mathfrak{j}-1} (\ulambda)$ and $\mathcal{E}_{\mathfrak{j}+1} (\ulambda)$. They are given in the following table:
 
             $$\begin{array}{|c||c||c|}
      \hline
       \text{Possible nature }& \text{Nature }  &  \text{Possible nature}   \\
        \text{of the node of content $j-1$} & \text{of the node of content $j$} &  \text{of the node of content $j+1$}  \\
      \hline
   B_v\text{ or }R   & A & B_h\text{ or }R    \\
        \hline 
    B_h\text{ or }A    & R & B_v\text{ or }A       \\
        \hline 
     B_v\text{ or }R           & B_v & B_v\text{ or }A       \\
        \hline 
      B_h\text{ or }A        & B_h & B_h\text{ or }R       \\
        \hline 
     \end{array}$$

  \begin{exa}
  We consider the bipartition $\ulambda=(3.3.1,2.1)$ of $n=10$ and ${\bf s}=(0,1)$, $e=3$.  Here is the associated extended Young diagram with the residue 
   of each node written in the associated box:
   \vspace{1cm}  
  
  \centerline{
 \Bigg(\  \ \ \ \ytableausetup
{mathmode}\begin{ytableau}
\none  &  \none  & \none  & \none  & 1 & 2 &0 & \none[\dots]& \none[\dots] \\
 \none &0 & 1  & 2 \\
\none & 2 & 0 & 1 \\
\none & 1 \\  
2  \\
1  \\
0  \\
\none[\vdots] \\
 \none[\vdots] \\
\end{ytableau},
\begin{ytableau}
\none  &  \none  & \none   & 1 & 2 &0 & \none[\dots]& \none[\dots] \\
\none  &  1 & 2  \\
\none & 0 \\  
1  \\
0 \\
2  \\
\none[\vdots] \\
 \none[\vdots] \\
\end{ytableau}\  \ \ \  \Bigg)}
$\mathcal{E}_{2} (\ulambda)$ consists in  the following extended nodes: 
\begin{itemize}
\item the nodes $(0,5+3k,1)$ with  $k\in \mathbb{N}$ which are of nature $B_h$. 
\item the nodes $(0,4+3k,2)$ with  $k\in \mathbb{N}$ which are of nature $B_h$. 
\item the nodes $(4+3k,0,1)$ with  $k\in \mathbb{N}$ which are of nature $B_v$. 
\item the nodes $(5+3k,0,2)$ with  $k\in \mathbb{N}$ which are of nature $B_v$. 
\item $(1,3,1)$ which is of nature $B_v$, $(3,2,1)$ which is of nature $A$,  $(1,2,2)$ which is of nature $R$
 and $(3,1,2)$ which is of nature $A$. 
\end{itemize}
 \end{exa}

\subsection{Order on nodes of a bipartition}
Let $\ulambda$ be a bipartition and $\gamma_1$ (resp. $\gamma_2$) be an extended  node of $\ulambda$   or an addable node  of $\ulambda$. We write $\gamma_1 <_{\bf s}  \gamma_2$
 if and only if 
   \begin{itemize}
  \item $\operatorname{cont}(\gamma_1)<\operatorname{cont}(\gamma_2)$ or,
\item $\operatorname{cont}(\gamma_1)=\operatorname{cont}(\gamma_2)$, $c_1=2$ and $c_2=1$.

  \end{itemize}
Let now $\mathfrak{j} \in I$.  Then the nodes in
$\mathcal{E}_{\mathfrak{j}} (\ulambda)$ are all comparable and  
we see that we have  $\gamma_1 <_{\bf s}\gamma_2$  for such two nodes and that these two nodes are consecutive if we are in one of the following two cases:
\begin{itemize}
\item $\operatorname{cont}(\gamma_1)=\operatorname{cont}(\gamma_2)$, $c_1=2$ , $c_2=1$,
\item $\operatorname{cont}(\gamma_1)+e=\operatorname{cont}(\gamma_2)$,  $c_1=1$, $c_2=2$,  (if $e$ is finite, if $e=\infty$ only the above case occurs because we only have two nodes with a given content). 

\end{itemize}


%

\subsection{Order on bipartitions}\label{ord}

  We now define a total  order $\preceq_{\bf s}$ on the set of bipartitions depending on ${\bf s}$. To do this, let $\ulambda \in \mathcal{P}^2 (n)$, consider the set of all the nodes on the vertical boundary of $\ulambda$ and write them in decreasing order with respect to $<_{\bf s}$ (such nodes are always comparable because there is no nodes in the same component with the same content which are in the vertical boundary). We obtain an infinite sequence $(\gamma_{i} (\ulambda))_{i\in  \mathbb{Z}_{>0}}$ which will be called the {\it boundary sequence}. We write $\ulambda  \preceq_{\bf s} \umu$ if and only $\ulambda=\umu$ or if there exists $j>0$ such that 
$$\forall 0<i<j,\ \gamma_i (\ulambda)=\gamma_i (\umu),\ \text{and }  
\gamma_j (\ulambda)  <_{\bf s}  \gamma_j (\umu).$$
Consider the lexicographic order on the set of bipartitions, that is, $\ulambda  \leq  \umu$
 if and only $\lambda=\mu$ or if there exists $j>0$ such that 
$$\forall 0<i<j,\ \lambda^1_i =\mu^1_i,\ \text{and }  \lambda^1_j  <  \mu^1_j$$
or $\lambda^1=\mu^1$  and  there exists $j>0$ such that 
$$\forall 0<i<j,\ \lambda^2_i =\mu^2_i,\ \text{and }  \lambda^2_j  <  \mu^2_j.$$
We write  $\ulambda  \prec_{\bf s} \umu$  if  $\ulambda  \preceq_{\bf s} \umu$  and  $\ulambda  \neq  \umu$. We have the following particular case.
\begin{Prop}\label{djmasy}
Assume that ${\bf s}=(s_1,s_2)$ is such that  $s_1-s_2>n-1$ then we have 
$$\ulambda  \preceq_{\bf s} \umu \iff \ulambda  \leq  \umu. $$
\end{Prop}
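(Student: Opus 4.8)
The plan is to compute the boundary sequence explicitly in the asymptotic regime and check that it reproduces the lexicographic order term by term. Note first that $s_1-s_2>n-1$ is the same as $s_1-s_2\geq n$, since all quantities are integers. For a bipartition $\urho$ and $c\in\{1,2\}$ adopt the convention $\rho^c_a=0$ for $a$ larger than the number $p_c$ of nonzero parts of $\rho^c$; then the extended nodes of the vertical boundary of $\urho$ lying in component $c$ are exactly the nodes $(a,\rho^c_a,c)$ for $a\geq 1$, with strictly decreasing contents $\beta^c_a:=\rho^c_a-a+s_c$, and $\beta^c_a=s_c-a$ whenever $a>p_c$. In particular the contents occurring in component $1$ of the boundary contain \emph{every} integer $\leq s_1-p_1-1$. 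I would then observe that the ``head'' of the boundary sequence of $\urho$ reads off $\rho^1$. The point is $\beta^1_{p_1+1}\geq\beta^2_1$: here $\beta^1_{p_1+1}=s_1-p_1-1$ and $\beta^2_1=\rho^2_1-1+s_2$, and since $p_1\leq|\rho^1|$, $\rho^2_1\leq|\rho^2|$ and $|\rho^1|+|\rho^2|=n\leq s_1-s_2$ one gets $\beta^1_{p_1+1}-\beta^2_1=(s_1-s_2)-(p_1+\rho^2_1)\geq 0$. As $\beta^1_1>\cdots>\beta^1_{p_1+1}\geq\beta^2_1\geq\beta^2_a$ for all $a$, and $<_{\bf s}$-ties are broken in favour of component $1$, the first $p_1+1$ terms of $(\gamma_i(\urho))_i$ are exactly $(1,\rho^1_1,1),(2,\rho^1_2,1),\dots,(p_1+1,0,1)$, in this order.

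Now let $\ulambda\neq\umu$ in $\mathcal P^2(n)$. Since both $\preceq_{\bf s}$ and the lexicographic order are total, it suffices to prove $\ulambda<\umu\ \Rightarrow\ \ulambda\prec_{\bf s}\umu$. Assume first $\lambda^1\neq\mu^1$ and let $j$ be minimal with $\lambda^1_j\neq\mu^1_j$; after swapping $\ulambda$ and $\umu$ we may take $\lambda^1_j<\mu^1_j$. Then $\mu^1$ has at least $j$ nonzero parts and $\lambda^1$ at least $j-1$, so by the head observation $\gamma_i(\ulambda)=(i,\lambda^1_i,1)$ and $\gamma_i(\umu)=(i,\mu^1_i,1)$ for all $i\leq j$; these agree for $i<j$, while $\gamma_j(\ulambda)$ has content $\lambda^1_j-j+s_1$ strictly below the content $\mu^1_j-j+s_1$ of $\gamma_j(\umu)$. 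Hence $\ulambda\prec_{\bf s}\umu$, as required.

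Assume now $\lambda^1=\mu^1=:\alpha$ with $p$ nonzero parts, and $\lambda^2\neq\mu^2$; after swapping we may take $\lambda^2<\mu^2$ lexicographically and let $k$ be minimal with $\lambda^2_k<\mu^2_k$. Since $\lambda^1=\mu^1$, the two bipartitions have the same component-$1$ boundary, so beyond the common head of length $p+1$ their boundary sequences are obtained by merging the \emph{fixed} strictly decreasing sequence of component-$1$ contents below $s_1-p-1$ — which, by the first paragraph, consists of \emph{all} integers $\leq s_1-p-2$ — with the component-$2$ contents $\lambda^2_a-a+s_2$, resp.\ $\mu^2_a-a+s_2$, ties again favouring component $1$. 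The key inequality, where the hypothesis enters, is that $(\mu^2_k-k+s_2)-1\leq s_1-p-2$, i.e.\ that $\mu^2_k-k+s_2-1$ is itself one of the component-$1$ contents: this reads $s_1-s_2\geq\mu^2_k-k+p+1$, which holds since $\mu^2_k-k+p+1\leq|\mu^2|+p\leq n\leq s_1-s_2$ (using $\mu^2_k\leq|\mu^2|$, $k\geq1$, and $p\leq|\alpha|$). Consequently there is a component-$1$ content strictly between $\lambda^2_k-k+s_2$ and $\mu^2_k-k+s_2$. A bookkeeping argument then shows: the node of content $\mu^2_k-k+s_2$ occurs in the boundary sequence of $\umu$ strictly before the position where the node of content $\lambda^2_k-k+s_2$ occurs in that of $\ulambda$; the two sequences agree up to the term just before that position (everything before it consists of common data, namely the common component-$1$ nodes of content $\geq\mu^2_k-k+s_2$ together with $\lambda^2_1-1+s_2,\dots,\lambda^2_{k-1}-(k-1)+s_2$); and at that position $\umu$ carries a component-$2$ node of content $\mu^2_k-k+s_2$ whereas $\ulambda$ carries a component-$1$ node of the strictly smaller content $\mu^2_k-k+s_2-1$. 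Hence $\ulambda\prec_{\bf s}\umu$, and the proof is complete.

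The only genuine work is in this last step: one has to control how the ``virtual'' component-$1$ boundary nodes interleave with the component-$2$ boundary and identify exactly which term of $\ulambda$'s boundary sequence occupies the critical position. The observation that makes it go through is that the component-$1$ contents form a full down-set of $\Z$, so the bound $s_1-s_2\geq n$ always produces a separating content between $\lambda^2_k-k+s_2$ and $\mu^2_k-k+s_2$; the remaining bookkeeping, including the handling of $<_{\bf s}$-ties (resolved in favour of component $1$), is routine.
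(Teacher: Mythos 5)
Your proof is correct and follows the same basic strategy as the paper's own argument --- compute the boundary sequence explicitly in the asymptotic regime and match it term by term against the lexicographic comparison --- but you carry it out in much more detail, and in doing so you quietly repair an overstatement in the paper's proof. The paper asserts that for any two nodes $\gamma=(a,b,c)$, $\eta=(a',b',c')$ on the vertical boundary one has $\gamma\leq_{\bf s}\eta$ if and only if ($c=2$, $c'=1$) or ($c=c'$, $a<a'$); taken literally this fails for the deep virtual component-$1$ nodes $(a,0,1)$ with $a$ large, whose contents $s_1-a$ drop below every component-$2$ content, so the boundary sequence is genuinely an interleaving rather than ``all of component $1$, then all of component $2$''. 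Your observation that the component-$1$ contents form a full down-set $\{m\in\mathbb{Z}\ :\ m\leq s_1-p_1-1\}$ sitting weakly above every component-$2$ content (this is exactly where $s_1-s_2\geq n$ enters, twice) is what makes the critical comparison work, and your bookkeeping in the case $\lambda^1=\mu^1$ is right: the first position where the two sequences differ carries, for $\umu$, the component-$2$ node of content $C:=\mu^2_k-k+s_2$ and, for $\ulambda$, the component-$1$ node of content $C-1$. One intermediate phrase is slightly off: when $\lambda^2_k=\mu^2_k-1$ there is no component-$1$ content \emph{strictly} between $\lambda^2_k-k+s_2$ and $C$, since $C-1$ then coincides with $\lambda^2_k-k+s_2$; this is harmless because, as you note, ties are resolved in favour of component $1$, so the node occupying the critical position of $\ulambda$'s sequence is still the component-$1$ node of content $C-1$ and the conclusion $\ulambda\prec_{\bf s}\umu$ stands.
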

\begin{proof}
Assume that  $s_1-s_2>n-1$ then if $\gamma=(a,b,c)$ and $\eta=(a',b',c')$ are two nodes on the vertical boundary of $\ulambda$ we have that 
 $\gamma \leq_{\bf s} \eta$ if and only if:
 \begin{itemize}
 \item $c'=1$ and $c=2$,
 \item $c=c'$ and $a<a'$.
 \end{itemize}
 This implies that the order $\preceq_{\bf s}$ is the same as the usual lexicographic order on bipartitions. 
\end{proof}
\begin{Rem}
Of course, the order $\preceq_{\bf s}$  strongly depends on the choice of  ${\bf s}$ and it does not correspond to the lexicographic order in general if $s_1-s_2\leq n-1$.

\end{Rem}

\subsection{Boundary sequence  and nature of nodes}\label{Nord}

We study in details the relations between the boundary sequence and the nature of some nodes of a bipartition. Let $\ulambda=(\lambda^1,\lambda^2)$ 
 be a bipartition and ${\bf s}=(s_1,s_2)\in \mathbb{Z}^2$.   For each $j\in \mathbb{Z}$ and each component, there is a unique extended node with content $j$
 which is in this component and which is either addable either in the boundary of $\ulambda$.

Also note that this node is of nature $B_{v}$ or $R$ if and only if $j$ appears in the boundary sequence. 
 As a consequence, one may easily obtain 
 the boundary sequence from a table, called the {\it table of natures}, listing all the nature of the extended nodes associated with each $j\in \mathbb{Z}$ (representing the content of the node) 
  and $c\in \{1,2\}$ (representing the component of the node) as in the following examples.

     \begin{exa}
Let   ${\bf s}=(0,1)$ and  consider the bipartition $\ulambda=(6.1,2.2)$.          
      \vspace{0,3cm}
      
  \centerline{
 \Bigg(\  \ \ \ \ytableausetup
{mathmode}\begin{ytableau}
\none  &  \none  & \none  & \none  & \none & \none & \none &7 & \none[\dots]& \none[\dots] \\
 \none &0 & 1  & 2 & 3 & 4 &  5\\
\none & -1 \\  
-3  \\
\none[\vdots] \\
 \none[\vdots] \\
\end{ytableau},
\begin{ytableau}
\none  &  \none  & \none   & 4 &5&6&7  & \none[\dots]& \none[\dots] \\
\none  &  1 & 2  \\
\none & 0  & 1\\  
-2  \\
-3 \\
 \none[\vdots] \\
\end{ytableau}\  \ \ \  \Bigg) 
}

   The table of natures  gives the nature of all the (extended) nodes which are either addable either in the boundary, of content between $-3$ and $6$ written in increasing order with respect to ${(0,1)} $ for $\ulambda$. The nodes of content 
    greater than $6$ are all   virtual nodes of the horizontal boundary and the nodes of content 
   less  than $-3$ are all   virtual nodes of the vertival  boundary

            \vspace{0,3cm}
      
      {\small
      $$\begin{array}{|c||c|c|c|c|c|c|c|c|c|c|c|c|c|c|c|c|c|c|c|c|c|c|}
      \hline
      \text{Component}  & 2 & 1 & 2 &  1 & 2 & 1 & 2 &  1 & 2 & 1 & 2 & 1 & 2 & 1 & 2 &  1 & 2 & 1 & 2 &  1 \\
      \hline
        \text{Content} & -3 & -3 & -2 & -2 &  -1 & -1 & 0 & 0 &  1 & 1 & 2 & 2 & 3 & 3 & 4 & 4 &  5 & 5 & 6 & 6  \\
        \hline 
         (6.1,2.2) & {\bf B_v} & {\bf B_v} & {\bf B_v} & A &  A & {\bf R} & B_h& A &  {\bf R} & B_h & {\bf B_v} & B_h & A & B_h & B_h & B_h &  B_h & {\bf R} & B_h & A   \\
          \hline 
     \end{array}$$}

     From this, we can write the infinite sequence of \S \ref{ord}  which allows to compare  bipartitions with respect to $\preceq_{{\bf s}}$.   
     If a node if of nature $B_v$ or $R$, then this means that this node is in this sequence otherwise, it is not. For example,  
     take the bipartition $(6.3,2.1)$ then with respect to $(0,1)$ we have:
     
           {\small
      $$\begin{array}{|c||c|c|c|c|c|c|c|c|c|c|c|c|c|c|c|c|c|c|c|c|c|c|}
      \hline
      \text{Component}  & 2 & 1 & 2 &  1 & 2 & 1 & 2 &  1 & 2 & 1 & 2 & 1 & 2 & 1 & 2 &  1 & 2 & 1 & 2 &  1 \\
      \hline
        \text{Content} & -3 & -3 & -2 & -2 &  -1 & -1 & 0 & 0 &  1 & 1 & 2 & 2 & 3 & 3 & 4 & 4 &  5 & 5 & 6 & 6  \\
        \hline 
         (6.3,2) & {\bf B_v} & {\bf B_v} & {\bf B_v} & A &  {\bf B_v} & B_h & A& B_h &  {B}_h & {\bf R} & {\bf R} &A & A & B_h & B_h & B_h &B_h   &  {\bf R} & B_h & A   \\
          \hline 
     \end{array}$$}
     and we immediately see that $(6.1,2.2)\prec_{(0,1)} (6.3,2)$. 
     
     \end{exa}

     In $\umu$ is another bipartition, then we say that the two extended or addable nodes of $\ulambda$  and  $\umu$ with the same content and in the same component 
    has the same {\it general nature} if the nature of these nodes are both  in $\{R,B_v\}$ or both in $\{A,B_h\}$.  Hence, one can compare two bipartitions 
        with respect to $\prec_{\bf s}$ by looking at the table of natures of $\umu$ and $\ulambda$   and  by checking  when, starting from the left, the general nature of extended or addable nodes differs for 
         $\ulambda$ and $\umu$.  In the above case, this happens for the content $1$ and component $1$.

\section{Uglov bipartitions}

In this section, we define the notion of Uglov bipartitions, recall the context in which they appear  and recall several properties. 

\subsection{Definition}

Let $\mathfrak{g}_e$ be the quantum affine algebra of type $A^{(1)}_{e-1}$. Let ${\bf s} \in \mathbb{Z}^2$.  
 For each $n\in \mathbb{Z}_{\geq 0}$, we have a $\mathbb{C}$-vector space:
 $$\mathcal{F}_n= \bigoplus_{\ulambda \in \mathcal{P}^2 (n)}  \mathbb{C}   \ulambda.$$
The Fock space of level two is then the following $\mathbb{C}$-vector space:
$$\mathcal{F}=\bigoplus_{n\in \mathbb{Z}_{\geq 0}} \mathcal{F}_n. $$
There is an action of $\mathfrak{g}$  on $\mathcal{F}$ which gives a structure of an  integrable module.  
In particular, the action of the Chevalley operators $e_{\mathfrak{j}}$ and  $f_{\mathfrak{j}}$ (with $\mathfrak{j}\in I$)    on the Fock space are given by:
 $$\forall \mathfrak{j}\in I,\forall \ulambda \in \mathcal{P}^2 (n),\ f_\mathfrak{j} \ulambda=\sum_{\mathcal{Y}(\umu)=\mathcal{Y}(\ulambda)\sqcup\{\gamma\},\ \operatorname{res}(\gamma)=\mathfrak{j}} \umu,  e_\mathfrak{j}\ulambda=\sum_{\mathcal{Y}(\ulambda)=\mathcal{Y}(\umu)\sqcup\{\gamma\},\ \operatorname{res}(\gamma)=\mathfrak{j}} \umu.$$
 The submodule $V_{\bf s}$ generated by the empty bipartition is then an irreducible highest weight module with weight $\Lambda_{s_1}+\Lambda_{s_2}$ (where the $\Lambda_i$'s, $i\in I$, denote the fundamental weights).

\subsection{Canonical bases}\label{cb}

 Let ${\boldsymbol{\lambda}}$ be a bipartition,  
and $\mathfrak{j}\in I$. We consider the set of addable
and removable $\mathfrak{j}$-nodes of ${\boldsymbol{\lambda}}$. 
We define a word obtained by reading  these nodes in the increasing order with respect to $\prec_{{\mathbf{s}}}$, 
If a removable $\mathfrak{j}$-node appears just before an addable $\mathfrak{j}$-node, we delete both 
and continue the same procedure as many times as possible. In the end, we reach a word 
 of nodes such that the first $p$ nodes are addable and the last $q$ nodes are removable, for some $p, q\in\mathbb{N}$. 
If $p>0,$ let $\gamma$ be
the rightmost addable $\mathfrak{j}$-node. If it exists, the node $\gamma$ is called the \emph{good} $\mathfrak{j}$-node of $\boldsymbol{\lambda}$. 

By definition, $\ulambda$ is said to be an {\it Uglov bipartition} of rank $n>0$ if there exists a sequence of bipartitions 
$\ulambda^{[1]}:=(\emptyset,\emptyset)$, $\ulambda^{[2]}$, \ldots, $\ulambda^{[n]}:=\ulambda$ such that for each $j\in \{1,\ldots,n\}$, the bipartition $\ulambda^{[j]}$ is in $\mathcal{P}^2 (j)$ and such that for each $j\in \{2,\ldots,n\}$
$\lambda^{[j]}$ is obtained by
adding a good  $\mathfrak{j}$-node to $\lambda^{[j-1]} $.

The set of Uglov bipartitons of rank $n$ is denoted by  $\Uglov{e}{\bf s}(n)$ and the set of all Uglov bipartitions by  
$\Uglov{e}{\bf s}$. Note that $(\emptyset,\emptyset)$ is the unique Uglov bipartition with rank $0$.

Let us come back to the submodule $V_{\bf s}$  of the Fock space. By \cite[Th 6.6.14]{GJ} (this is a result by Uglov using Kashiwara-Lusztig theory of canonical bases for quantum groups), there exists a basis, called the canonical basis for $V_{\bf s}$  (as a $\mathbb{C}$-vector space) which is labeled by the set of Uglov bipartitions:
$$\{ G(\ulambda,{\bf s})\ |\ \ulambda \in \Uglov{e}{\bf s}\}.$$
In fact, we even get that the set 
$$\{ G(\ulambda,{\bf s})\ |\ \ulambda \in \Uglov{e}{\bf s}(n)\}$$
is a basis of $V_{\bf s}\cap \mathcal{F}_n $.

 This basis enjoys nice properties which we not recall here (see \cite[Th. 6;6;11]{GJ}). All we need in the folllowing is:
 \begin{Prop}[Uglov \cite{Ug}]\label{max}
   For each 
$\ulambda \in \Uglov{e}{\bf s} (n)$, we have:
$$G(\ulambda,{\bf s})=\ulambda+\text{Linear combination of $\umu$ of rank $n$ with }\umu \prec_{\bf s} \ulambda.$$ 
\end{Prop}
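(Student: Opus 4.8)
The plan is to prove the statement by induction on the rank $n$, following the recursive structure of the definition of Uglov bipartitions and of the canonical basis. The case $n=0$ is trivial since $G(\uemptyset,{\bf s})=\uemptyset$. For the inductive step, let $\ulambda\in\Uglov{e}{\bf s}(n)$ and let $\gamma$ be the good $\mathfrak{j}$-node such that $\ulambda$ is obtained from some $\ulambda'\in\Uglov{e}{\bf s}(n-1)$ by adding $\gamma$. The key tool is the divided power $f_\mathfrak{j}^{(r)}$ of a Chevalley operator (where $r$ counts good $\mathfrak{j}$-nodes appropriately): by Kashiwara's theory, applying the appropriate $f_\mathfrak{j}^{(r)}$ to $G(\ulambda',{\bf s})$ produces a bar-invariant element which, after subtracting lower-order canonical basis vectors, equals $G(\ulambda,{\bf s})$. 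So I would first compute $f_\mathfrak{j}^{(r)}\ulambda'$ in the Fock space using the combinatorial action recalled in \S 1.4, express it as a sum $\sum_\umu d_\umu\,\umu$ over bipartitions $\umu$ obtained by adding $r$ nodes of residue $\mathfrak{j}$, and then invoke the inductive hypothesis $G(\ulambda',{\bf s})=\ulambda'+\sum_{\unu\prec_{\bf s}\ulambda'}c_\unu\unu$.

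The combinatorial heart of the argument is to show that $\ulambda$ is the $\prec_{\bf s}$-maximal bipartition appearing in $f_\mathfrak{j}^{(r)}\ulambda'$ with coefficient $1$, and that every other $\umu$ occurring there satisfies $\umu\prec_{\bf s}\ulambda$. This is exactly where the ``good node'' condition and the table of natures from \S 1.3--1.4 enter: adding $\mathfrak{j}$-nodes changes the boundary sequence in a controlled way, and among all ways of adding $r$ nodes of residue $\mathfrak{j}$ to $\ulambda'$, filling in the $r$ rightmost addable $\mathfrak{j}$-nodes (which includes $\gamma$) produces the boundary sequence that is largest with respect to $<_{\bf s}$ read from the left. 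One also has to combine this with the contribution of the lower terms $c_\unu\unu$ of $G(\ulambda',{\bf s})$: since $\unu\prec_{\bf s}\ulambda'$, any bipartition obtained from $\unu$ by adding $r$ nodes of residue $\mathfrak{j}$ is $\prec_{\bf s}$ a bipartition obtained from $\ulambda'$ by adding $r$ such nodes, hence $\prec_{\bf s}\ulambda$; this requires a monotonicity lemma stating that $f_\mathfrak{j}^{(r)}$ is compatible with $\preceq_{\bf s}$ in the sense that $\unu\preceq_{\bf s}\ulambda'$ implies every $\mathfrak{j}$-added child of $\unu$ is $\preceq_{\bf s}$ some $\mathfrak{j}$-added child of $\ulambda'$.

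Once these two facts are in place, $f_\mathfrak{j}^{(r)}G(\ulambda',{\bf s})=\ulambda+(\text{lower order terms in }\prec_{\bf s})$, and since the $G(\umu,{\bf s})$ with $\umu\preceq_{\bf s}\ulambda$ form a basis of the span of such $\umu$ which is itself $\prec_{\bf s}$-triangular with respect to the $\umu$'s (again by induction, for ranks $\le n$ as far as they have already been constructed — or one simply invokes the existence of the canonical basis from \cite[Th 6.6.14]{GJ} and only needs its triangularity shape, which is what we are proving), one can correct $f_\mathfrak{j}^{(r)}G(\ulambda',{\bf s})$ by subtracting a $\mathbb{C}$-linear combination of $G(\umu,{\bf s})$ with $\umu\prec_{\bf s}\ulambda$ to obtain a bar-invariant element of the required triangular form, which by unicity of the canonical basis must be $G(\ulambda,{\bf s})$. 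The main obstacle is the combinatorial monotonicity statement of the second paragraph: one must argue carefully, using the four-row transition table for natures of consecutive-content nodes, that adding $\mathfrak{j}$-nodes to a $\preceq_{\bf s}$-smaller bipartition cannot overtake the result of adding them optimally to a $\preceq_{\bf s}$-larger one, which amounts to tracking how the general nature of each content/component pair evolves and checking it never crosses.
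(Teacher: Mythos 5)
This proposition is not proved in the paper at all: it is quoted as a known result of Uglov, via \cite[Th.~6.6.14]{GJ}, and the author even indicates later that what Uglov actually proves is unitriangularity with respect to a dominance-type order $\ll_{\bf m}$ on symbols, which is then only \emph{coarsened} into the boundary-sequence order via the implication $\ulambda \ll_{\bf m} \umu \Rightarrow \ulambda \prec_{\bf s} \umu$. So you are attempting to reprove a cited theorem, and your outline is the natural LLT-type induction; but as written it has a genuine gap exactly at the point you yourself flag as ``the main obstacle''. The monotonicity statement --- that $\unu \preceq_{\bf s} \ulambda'$ forces every bipartition obtained from $\unu$ by adding $r$ nodes of residue $\mathfrak{j}$ to be $\prec_{\bf s} \ulambda$ --- is not a routine check against the table of natures: adding a $\mathfrak{j}$-node to $\unu$ at a position of large content can alter the boundary sequence of $\unu$ \emph{before} the position where $\unu$ and $\ulambda'$ first differ, so the lexicographic comparison can a priori flip. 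This is precisely why Uglov's proof does not run the induction against $\prec_{\bf s}$ directly but instead establishes triangularity of the bar involution with respect to the dominance order on $\beta$-numbers/symbols, using the $q$-wedge realization of the Fock space and its straightening relations; the passage to $\prec_{\bf s}$ is then a separate, purely combinatorial implication. Without either proving your monotonicity lemma or routing through the dominance order, the inductive step does not close.

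A second, smaller issue: even granting the leading-term computation, subtracting canonical basis elements $G(\umu,{\bf s})$ with $\umu \prec_{\bf s} \ulambda$ so as to reach ``the required triangular form'' does not by itself identify the result with $G(\ulambda,{\bf s})$. The canonical basis is characterized by bar-invariance together with a lattice condition (coefficients in $v\mathbb{Z}[v]$ off the leading term), not by $\prec_{\bf s}$-triangularity, so you must carry out the whole argument over $\mathbb{Z}[v,v^{-1}]$ for the quantum group $\mathcal{U}_v$, verify the degree bounds in the LLT correction step, and only then specialize at $v=1$ to get the statement as formulated in the paper (where $\mathcal{F}$ is merely a $\mathbb{C}$-vector space). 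These are standard but non-optional steps, and your sketch elides them.
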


\begin{lemma}
For all sequences of residues $(\mathfrak{j}_1,\ldots,\mathfrak{j}_n)\in I^n$, the maximal element with respect to $\prec_{\bf s}$ in $f_{\mathfrak{j}_1}\ldots f_{\mathfrak{j}_n}.\uemptyset$ is an Uglov bipartition of rank $n$. 
\end{lemma}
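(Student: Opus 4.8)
\emph{Proof plan.} The plan is to set $v:=f_{\mathfrak{j}_1}\cdots f_{\mathfrak{j}_n}.\uemptyset$, expand it in the basis $\{G(\ulambda,{\bf s})\ |\ \ulambda\in\Uglov{e}{\bf s}(n)\}$, and read off its $\prec_{\bf s}$-maximal term using the unitriangularity of Proposition \ref{max}. The whole thing is essentially formal once that proposition is available.

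First I would check that $v\in V_{\bf s}\cap\mathcal{F}_n$. Indeed $\uemptyset$ generates the submodule $V_{\bf s}$, this submodule is stable under all the Chevalley operators $f_{\mathfrak{j}}$, and each application of an $f_{\mathfrak{j}}$ raises the rank by one, so after $n$ steps the result lies in $\mathcal{F}_n$. If $v=0$ there is nothing to prove, since then there is no maximal element occurring in $v$; so assume $v\neq 0$. Using that $\{G(\ulambda,{\bf s})\ |\ \ulambda\in\Uglov{e}{\bf s}(n)\}$ is a basis of $V_{\bf s}\cap\mathcal{F}_n$, write $v=\sum_{\ulambda\in\Uglov{e}{\bf s}(n)}a_{\ulambda}\,G(\ulambda,{\bf s})$ with finitely many, and not all, coefficients $a_{\ulambda}$ nonzero.

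Since $\prec_{\bf s}$ is a total order, the finite nonempty set $\{\ulambda\ |\ a_{\ulambda}\neq 0\}$ has a $\prec_{\bf s}$-maximal element $\ulambda_0$, and I claim that $\ulambda_0$ is precisely the $\prec_{\bf s}$-maximal bipartition occurring in $v$. Substituting $G(\ulambda,{\bf s})=\ulambda+\sum_{\umu\prec_{\bf s}\ulambda}c_{\ulambda,\umu}\,\umu$ from Proposition \ref{max}: on the one hand, any bipartition occurring in $v$ occurs in some $G(\ulambda,{\bf s})$ with $a_{\ulambda}\neq 0$, hence is $\preceq_{\bf s}\ulambda\preceq_{\bf s}\ulambda_0$; on the other hand, among the basis vectors with $a_{\ulambda}\neq 0$, only $G(\ulambda_0,{\bf s})$ involves $\ulambda_0$ (each $G(\ulambda,{\bf s})$ with $a_{\ulambda}\neq 0$ and $\ulambda\neq\ulambda_0$ has $\ulambda\prec_{\bf s}\ulambda_0$, so it is supported on bipartitions $\preceq_{\bf s}\ulambda\prec_{\bf s}\ulambda_0$), and there it contributes the leading coefficient $1$, so the coefficient of $\ulambda_0$ in $v$ equals $a_{\ulambda_0}\neq 0$. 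Hence $\ulambda_0$ occurs in $v$ and dominates every bipartition occurring in $v$, so it is the $\prec_{\bf s}$-maximal element of $v$, and it is an Uglov bipartition of rank $n$ by construction. I do not expect a genuine obstacle here: the only mild points to be careful about are the membership $v\in V_{\bf s}\cap\mathcal{F}_n$ (so that $v$ really lies in the span of the $G(\ulambda,{\bf s})$) and the degenerate case $v=0$; no combinatorial property of Uglov bipartitions beyond the unitriangularity recorded in Proposition \ref{max} is needed.
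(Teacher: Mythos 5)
Your proof is correct and follows essentially the same route as the paper: expand $f_{\mathfrak{j}_1}\cdots f_{\mathfrak{j}_n}.\uemptyset$ in the canonical basis $\{G(\ulambda,{\bf s})\}$ of $V_{\bf s}\cap\mathcal{F}_n$ and use the unitriangularity of Proposition \ref{max} to identify the $\prec_{\bf s}$-maximal term with the maximal Uglov bipartition having nonzero coefficient. Your version is in fact slightly more careful than the paper's two-line argument, since you explicitly rule out cancellation of the leading term.
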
 
\begin{proof}
Each canonical basis element is a linear combination of bipartitions and the maximal one with respect to $\preceq_{\bf s}$ is an Uglov bipartition. As $f_{\mathfrak{j}_1}\ldots f_{\mathfrak{j}_n}.\uemptyset$  is a linear combination of these canonical basis elements, the results follows.

\end{proof}
%

\subsection{Bijections between Uglov bipartitions}\label{bij}

Let $\widehat{\mathfrak{S}}_2$  be the (extended) affine symmetric group. We denote by $P_2:=\mathbb{Z}^2$ the $\mathbb{Z}$-module with  standard basis $\{y_1,y_2\}$. We denote by $\sigma_1$ the generator of  $\mathbb{Z}/2\mathbb{Z}$. Then 
$\widehat{\mathfrak{S}}_2$ can be seen as 
  the semi-direct product $P_2 \rtimes \mathbb{Z}/2\mathbb{Z}$ where the relations  are given by  $\sigma_1 y_1 \sigma_1=y_{2}$.   
This group acts faithfully on $\mathbb{Z}^2$  by setting for any ${{{\bf s}}}=(s_{1},s_{2})\in 
\mathbb{Z}^{2}$: %
$$\begin{array}{rcll}
\sigma_1. (s_1,s_2)&=&(s_2,s_1) ,\\
y_1. (s_1,s_2)&=&(s_1+e,s_2) ,\\
y_2. (s_1,s_2)&=&(s_1,s_2+e).
\end{array}$$
 Set $\tau:=\sigma_1 y_1$ then $\widehat{\mathfrak{S}}_2$ is generated by $\sigma_1$ and $\tau$.  Moreover, we have  
a fundamental domain for this action  given by:
$$\mathcal{S}_e:=\left\{(s_1,s_2)\in \mathbb{Z}^2\ |\ 0\leq s_1 \leq  s_2 <e  \right\}.$$
If ${\bf s}$ is in $\mathcal{S}_e$, the set  $\Uglov{e}{{\bf s}}$ has a nice non recursive  definition (see \cite[Def. 5.7.8]{GJ}) but such 
  definition is not available in the general case. Nevertheless, one can use the following method to 
 compute the sets $\Uglov{e}{{\bf s}}$ in the general case.

We know that if ${\bf s}_1$ and ${\bf s}_2$ are in the same class modulo the action of $\widehat{\mathfrak{S}}_2$ then both  modules $V_{{\bf s}_1}$ and $V_{{\bf s}_2}$ are isomorphic and there is a bijection 
 $$\Psi^e_{{\bf s}_1\to {\bf s}_2}:  \Uglov{e}{{\bf s}_1}\to \Uglov{e}{{\bf s}_2}, $$ 
 which enjoys nice properties with respect to the module structure. In particular, this is a crystal isomorphism. This means that if $\ulambda \in \Uglov{e}{{\bf s}_1}$
  and if $\ulambda' \in \Uglov{e}{{\bf s}_1}$  is obtained from $\ulambda$ by removing a good $\mathfrak{j}$-node. Then 
 $\Psi^e_{{\bf s}_1\to {\bf s}_2} (\ulambda)$ admits a removable good $\mathfrak{j}$-node and if we remove it, we obtain a bipartition $\umu'$ satisfying 
 $\Psi^e_{{\bf s}_1\to {\bf s}_2} (\ulambda')=\umu'$. 
 
\begin{itemize}
\item if ${\bf s}_2=\tau. {\bf s}_1$, by \cite[Prop. 3.1(2)]{JU2},  we have 
$$\Psi^e_{{\bf s}_1\to {\bf s}_2} (\lambda^1,\lambda^2)=(\lambda^2,\lambda^1).$$

\item  if ${\bf s}_2=\sigma_1 .{\bf s}_1$, the bijection  has been combinatorially described in 
 \cite{JU2}. We don't need the explicit form of this bijection but only some properties which will be recall once we need them. 
\end{itemize}
Otherwise, the map $\Psi^e_{{\bf s}_1\to {\bf s}_2} $ is a composition of maps of the above form.  If $\ulambda \in   \Uglov{e}{{\bf s}_1}$ then 
 the Uglov bipartitions $\umu \in   \Uglov{e}{\sigma.{\bf s}_1}$ such that $\Psi^e_{{\bf s}_1\to \sigma.{\bf s}_1} (\ulambda)=\umu $ 
 for all $\sigma \in \widehat{\mathfrak{S}}_2$ are said to be the bipartitions in the isomorphism class of $\ulambda$.

\begin{exa}
Take ${\bf s}=(0,1)$ and $e=3$. We consider the bipartition $\ulambda=(6.1,2.2)$ which is in $\Uglov{e}{\bf s}$. Then one can compute 
 $\Psi^e_{{\bf s}\to \sigma.{\bf s}} (\ulambda)$  for $\sigma\in \widehat{\mathfrak{S}}_2$ using the algorithm  in \cite{JU2} or one can use the program given in \cite{Jgap}. We get for all $k\in \mathbb{Z}$:
$$\Psi^e_{(0,1)\to (1+3k,0)} (\ulambda)=
\left\{ \begin{array}{rcl}
(5.2.1,3) & \text{ if } k\geq  0 \\
(2.2,6.1) & \text{ if } k=-1  \\
 (2.1,6.1.1) & \text{ if } k< -1 \\
\end{array}
\right.\ \ 
\Psi^e_{(0,1)\to (0,1+3k)} (\ulambda)=
\left\{ \begin{array}{rcl}
(3,5.2.1) & \text{ if } k> 1 \\
 (6.1.1,2.1) & \text{ if } k< 0
\end{array}
\right.
$$
\end{exa}
 
 Thus one may recover the set of Uglov bipartitions 
 $\Uglov{e}{{\bf s}}$ for all ${\bf s}\in \mathbb{Z}^2$ from the set 
 $\Uglov{e}{{\bf s}'}$ where ${\bf s}'\in \mathcal{S}_e$ is in the orbit of ${\bf s}$ modulo the above action and from the use of the above known bijections.

  Finally, note that, given a bipartition $\ulambda\in \Uglov{e}{{\bf s}}$, the isomorphism $\Psi^e_{{\bf s}\to \sigma.{\bf s}}$
  affects the table of natures for the $\mathfrak{j}$-nodes. In fact, this table may be easily obtained from the one of 
  $\ulambda$ in the case where $\sigma:=\tau$: we just have to translate the natures of the nodes by one box to the right  for the nodes in component $2$. For the nodes in  component $1$,
   we have to translate them by $2e-1$ boxes.  Here is an example for $e=2$
  
        {\small
      $$\begin{array}{|c||c|c|c|c|c|c|c|c|c|c|c|c||}
      \hline
      \text{Component}&  \ldots&  2 &  1 & 2 & 1 & 2 &  1 & 2 & 1 & 2 &1 & \ldots \\
      \hline
        \text{Content} & \ldots &-2 & -2 &  -1 & -1 & 0 & 0 &  1 & 1 & 2 & 2& \ldots  \\
        \hline 
       \ulambda & \ldots & X_1 & X_2& X_3 &  X_4 & X_5 & X_6 & X_7 &  {X_8} & X_9 &.  & \ldots   \\
          \hline 
       \Psi^e_{{\bf s}\to \tau.{\bf s}}(\ulambda) & \ldots & ? & X_1 & ? & X_3 & X_2  & X_5 & X_4   & X_7 &  X_6  & X_9  &\ldots   \\
         \hline 
     \end{array}$$}
  
Such table is less elementary  in the case $\sigma=\sigma_1$  but a table in \cite[\S 6.1.2]{J} explains the different possibilities: for a given content $j\in \mathbb{Z}$ and two
     addable or extended nodes $\mathfrak{j}$-nodes $\gamma_2$ and $\gamma_1$  of content $j$ in component $2$ and $1$, 
   the natures of the two associated nodes in component $1$ and $2$ are transformed into nodes with specific natures:

    \begin{center}
\begin{tabular}{|c|c||c|c|}
   \hline
   \multicolumn{2}{|c||}{{\bf Nodes  in} $\ulambda$} & \multicolumn{2}{c|}{\bf Nodes   in $\Psi^e_{{\bf s}\to \sigma_1.{\bf s}}  (\ulambda)$\ } \\
      \hline
 Component  $2$ &  Component $1$ &     Component  $2$ &  Component $1$  \\
   \hline 
   \hline 
   $R$ &$R$& $R$ &$R$ \\
   \hline
   $A$ &$R$& $A$ & $R$ \\
   \hline
   \multirow{2}*{$B_{v}$} &\multirow{2}*{$R$}& $R$ & $B_{v}$  \\
    \cline{3-4}
    & &    $B_{v}$& $R$ \\
   \hline
   $B_{h}$ &$R$& $B_{h}$ & $R$ \\
   \hline
   \multirow{2}*{$R$} & \multirow{2}*{$A$}& $R$ &$A$ \\
   \cline{3-4}
    & &    $B_{h}$& $B_{v}$ \\
   \hline
   $A$ &$A$& $A$ & $A$ \\
   \hline
   \multirow{2}*{$B_{v}$} &\multirow{2}*{$A$}& $B_{v}$ & $A$ \\
   \cline{3-4}
    & & $A$ & $B_{v}$ \\
   \hline 
   $B_{h}$ &$A$& $B_{h}$ & $A$ \\
   \hline
    \multirow{2}*{$R$} & \multirow{2}*{$B_{h}$}& $R$ &$B_{h}$  \\
    \cline{3-4}
     & & $B_{h}$ & $R$\\
   \hline
    \multirow{2}*{$A$} & \multirow{2}*{$B_{h}$}& $A$ & $B_{h}$ \\
    \cline{3-4}
     & & $B_{h}$ & $A$ \\
   \hline
    \multirow{3}*{$B_{v}$} & \multirow{3}*{$B_{h}$}& $R$ & $A$ \\
    \cline{3-4}
    & & $B_{v}$ & $B_{h}$ \\
     \cline{3-4}
    & & $B_{h}$ & $B_{v}$ \\
   \hline
   $B_{h}$ &$B_{h}$& $B_{h}$ & $B_{h}$ \\
   \hline
   $R$ &$B_{v}$& $R$ &$B_{v}$ \\
   \hline
   $A$ &$B_{v}$& $A$ & $B_{v}$ \\
   \hline
   $B_{v}$ &$B_{v}$& $B_{v}$ & $B_{v}$ \\
   \hline
   $B_{h}$ &$B_{v}$& $B_{h}$ & $B_{v}$ \\
   \hline
\end{tabular} 
\end{center}

   For example if 
 $ \gamma_2$ is of nature $R$ and $\gamma_1$ of nature $B_h$ then the nodes  $\gamma_2'$ and $\gamma_1'$  of content $j$ in component $2$ and $1$
  of   $ \Psi^e_{{\bf s}\to \sigma_1.{\bf s}}(\ulambda) $ may  be of nature $R$ and $B_h$, or $B_h$ and $R$. 
  
  Note however that there is a nice property which is verified by the bijection $\Psi^e_{{\bf s}\to \sigma_1.{\bf s}}$: by \cite{JL}, this bijection does not depend on $e$ and we thus have 
   $\Psi^e_{{\bf s}\to \sigma.{\bf s}}=\Psi^\infty_{{\bf s}\to \sigma.{\bf s}}$. 
  
   \begin{exa}
     Consider the bipartition $\ulambda=(6.1,2.2)$ which is in $\Uglov{3}{(0,1)}$. We have already seen that 
      $\umu:=\Psi^3_{(0,1)\to (1,0)}(5.2.1,3)$.  Write the Young tableau of these two bipartitions with the associated content:
      
      \vspace{0,3cm}
      
  \centerline{
 \Bigg(\  \ \ \ \ytableausetup
{mathmode}\begin{ytableau}
\none  &  \none  & \none  & \none  & \none & \none & \none &7 & \none[\dots]& \none[\dots] \\
 \none &0 & 1  & 2 & 3 & 4 &  5\\
\none & -1 \\  
-3  \\
\none[\vdots] \\
 \none[\vdots] \\
\end{ytableau},
\begin{ytableau}
\none  &  \none  & \none   & 4 &5&6&7  & \none[\dots]& \none[\dots] \\
\none  &  1 & 2  \\
\none & 0  & 1\\  
-2  \\
-3 \\
 \none[\vdots] \\
\end{ytableau}\  \ \ \  \Bigg) 
}

        \centerline{
 \Bigg(\  \ \ \ \ytableausetup
{mathmode}
\begin{ytableau}
\none  &  \none  & \none   &  \none & \none  &  \none & 7 & 8& \none[\dots] \\
\none  &  1 & {2}  & 3 & 4 & 5\\
\none & 0 & 1  \\  
\none & -1  \\
-3 \\
 \none[\vdots] \\
\end{ytableau},
\begin{ytableau}
\none  &  \none  & \none  & \none  & 4 & 5   & 6&7&  \none[\dots] \\
 \none &0 & 1  & {2} \\
 { -2} \\  
-3 \\
\none[\vdots] \\
 \none[\vdots] \\
\end{ytableau}\  \ \ \  \Bigg) 
}
      
      The following table gives the nature of all the nodes of content between $-3$ and $6$ written in increasing order with respect to ${(0,1)} $ for $\ulambda$ and  ${(1,0)}$ for $\umu$ 
            \vspace{0,3cm}
      
      {\small
      $$\begin{array}{|c||c|c|c|c|c|c|c|c|c|c|c|c|c|c|c|c|c|c|c|c|c|}
      \hline
      \text{Component}  & 2 & 1 & 2 &  1 & 2 & 1 & 2 &  1 & 2 & 1 & 2 & 1 & 2 & 1 & 2 &  1 & 2 & 1 & 2 &  1 \\
      \hline
        \text{Content} & -3 & -3 & -2 & -2 &  -1 & -1 & 0 & 0 &  1 & 1 & 2 & 2 & 3 & 3 & 4 & 4 &  5 & 5 & 6 & 6  \\
        \hline 
         (6.1,2.2) & {\bf B_v} & {\bf B_v} & {\bf B_v} & A &  A & {\bf R} & B_h& A &  {\bf R} & B_h & {\bf B_v} & B_h & A & B_h & B_h & B_h &  B_h & {\bf R} & B_h & A   \\
          \hline 
         (5.2.1,3) & {\bf B_v} & {\bf B_v} & {\bf B_v} & A &  A & {\bf R} & B_h& A &  B_h & {\bf R} & {\bf R} & A & A & B_h & B_h & B_h &  B_h & {\bf R}& B_h & A   \\
                       \hline 
     \end{array}$$}

%
%
     \end{exa}

 \begin{Rem}\label{A2}
 The isomorphism between $V_{{\bf s}_1}$ and $V_{{\bf s}_2}$ implies the following fact. Assume that $(\mathfrak{j}_1,\ldots,\mathfrak{j}_n)$
  is a sequence of element in $I$ then  $f_{\mathfrak{j}_1}\ldots f_{\mathfrak{j}_n}.\uemptyset$ writes as a linear combinaison of the canonical basis elements:
 $$f_{\mathfrak{j}_1}\ldots f_{\mathfrak{j}_n}.\uemptyset=\sum_{\ulambda \in  \Uglov{e}{{\bf s}_1}}  a_{\lambda} G(\ulambda,{\bf s}_1)$$
 Then we have:
  $$f_{\mathfrak{j}_1}\ldots f_{\mathfrak{j}_n}.\uemptyset=\sum_{\ulambda \in  \Uglov{e}{{\bf s}_1}}  a_{\lambda} G(\Psi^e_{{\bf s}_1\to \sigma.{\bf s}_1} (\ulambda),{\bf s}_2)$$

 \end{Rem}
 
 In the case where ${\bf s}\in \mathcal{S}_e$, the Uglov bipartitions are then known as FLOTW bipartitions
 and they have a non recursive description given as follows:
 \begin{Prop}[Foda-Leclerc-Okado-Thibon-Welsh]\label{flotwdef}
Assume that ${\bf s}=(s_1,s_2)\in \mathcal{S}_e$.  The set $\Uglov{e}{\bf s}$ of Uglov bipartitions is the  set of bipartitions  $\ulambda={(\lambda^{1},\lambda^{2})}$ such that:
\begin{enumerate}
\item for all  $i\in \mathbb{Z}_{>0}$, we have:
\begin{align*}
&\lambda_i^{1}\geq{\lambda^{2}_{i+s_{2}-s_{1}}},\\
&\lambda^{2}_i\geq{\lambda^{1}_{i+e+s_1-s_{2}}};
\end{align*}
\item  for all  $k>0$, among the residues of the nodes of the vertical boundary of the form $(a,\lambda^c_a,c)$
 with $a\in \mathbb{Z}_{>0}$, $c\in \{1,2\}$ and $\lambda^c_a=k$, at least one element of  $\{0,1,...,e-1\}$ does not occur.
\end{enumerate}
\end{Prop}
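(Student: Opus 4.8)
The plan is to identify the FLOTW set with the connected component of the empty bipartition in the crystal of the Fock space. Recall from \S\ref{cb} that $\Uglov{e}{\bf s}(n)$ is, by definition, the set of $\ulambda\in\mathcal{P}^2(n)$ reachable from $(\emptyset,\emptyset)$ by successively adding good nodes, equivalently the set of vertices of the crystal of $V_{\bf s}\cong B(\Lambda_{s_1}+\Lambda_{s_2})$ realized inside the Fock space. Write $\mathcal{F}\!\mathcal{L}_{\bf s}(n)$ for the set of bipartitions of rank $n$ satisfying (1) and (2). Since $(\emptyset,\emptyset)\in\mathcal{F}\!\mathcal{L}_{\bf s}(0)$, it is enough to establish the two closure statements: (a) every $\ulambda\in\mathcal{F}\!\mathcal{L}_{\bf s}(n)$ with $n\geq1$ has a good removable $\mathfrak{j}$-node $\gamma$ for some $\mathfrak{j}\in I$, and $\mathcal{Y}(\ulambda)\setminus\{\gamma\}$ is the Young diagram of some $\umu\in\mathcal{F}\!\mathcal{L}_{\bf s}(n-1)$; and (b) if $\umu\in\mathcal{F}\!\mathcal{L}_{\bf s}(n-1)$ and $\gamma$ is the good addable $\mathfrak{j}$-node of $\umu$, then the bipartition $\ulambda$ with $\mathcal{Y}(\ulambda)=\mathcal{Y}(\umu)\sqcup\{\gamma\}$ lies in $\mathcal{F}\!\mathcal{L}_{\bf s}(n)$. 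Then (b) gives $\Uglov{e}{\bf s}(n)\subseteq\mathcal{F}\!\mathcal{L}_{\bf s}(n)$ and (a) the reverse inclusion, both by induction on $n$.

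For (b) I would argue directly on the two conditions. Condition (1) is a list of inequalities comparing a row of one component with a row of the other, shifted by $s_2-s_1\geq0$ or by $e+s_1-s_2>0$ (both nonnegative thanks to ${\bf s}\in\mathcal{S}_e$); adding a single box can endanger only the inequality in which the affected row sits on the larger side, and the point is that a good addable node lies at the right end of the reduced $\mathfrak{j}$-word read along the boundary sequence, which forces the neighbouring boxes needed to re-establish (1) to be already present. Condition (2) says that for each column length $k$ occurring on the vertical boundary, the multiset of residues of the corresponding boundary nodes omits at least one class; adding the good node turns one column from length $k-1$ into length $k$, possibly merging or splitting the families attached to $k-1$ and $k$, and one checks using the table of natures of \S\ref{Nord} that the position of $\gamma$ in the reduced word is exactly what is needed to keep the omitted-residue property for both affected lengths.

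For (a) one must first produce a good removable node. Here the FLOTW conditions are used to rule out the degenerate configuration in which, for every residue $\mathfrak{j}$, the $\mathfrak{j}$-word along the boundary cancels down to a block of addable nodes only (so that $\tilde e_{\mathfrak{j}}$ annihilates $\ulambda$ for all $\mathfrak{j}$): condition (2) provides, for the relevant column length, a residue forcing a removable node to survive the cancellation. Once $\gamma$ is found, the fact that $\mathcal{Y}(\ulambda)\setminus\{\gamma\}$ is again FLOTW follows from the same local analysis as in (b), read backwards.

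The main obstacle is the bookkeeping in part (2): goodness of a node is defined through a global left-to-right cancellation along the entire boundary sequence, whereas the FLOTW condition is a local constraint on the residues attached to a fixed column length, and reconciling the two --- especially keeping track of what happens when two families of boundary nodes merge or split --- is the only step that is not a quick diagram chase. This is precisely where the table of natures of \S\ref{Nord} and the normalisation $0\leq s_1\leq s_2<e$ carry the argument. An alternative and shorter route, which is the one effectively used in \cite{GJ} (Def. 5.7.8), is to read the FLOTW description off Uglov's realisation of the crystal of $V_{\bf s}$ via $q$-deformed wedge spaces; I would fall back on the inductive argument above only if a proof independent of that machinery were desired.
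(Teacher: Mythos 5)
This proposition is not proved in the paper at all: it is quoted as a known theorem of Foda--Leclerc--Okado--Thibon--Welsh (via Uglov's wedge-space realisation of the crystal of $V_{\bf s}$, see \cite[Def.\ 5.7.8]{GJ}), so there is no internal argument to compare yours against. Your outline is the standard route one would take for a self-contained proof --- identify the FLOTW set with the connected component of $\uemptyset$ by showing it is closed under adding good nodes and that every nonempty member admits a good removable node whose removal stays in the set --- and the overall induction scheme is sound.

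However, as a proof it has genuine gaps: every step that actually carries the content of the theorem is asserted rather than verified. In (b), the claim that the position of the good node in the reduced $\mathfrak{j}$-word ``forces the neighbouring boxes needed to re-establish (1) to be already present'' and that condition (2) survives the merging/splitting of column-length families is exactly the theorem, and ``one checks using the table of natures'' is not a check; note also that adding a box to row $(a,c)$ endangers the inequality in which $\lambda^c_a$ sits on the \emph{smaller} (right-hand) side, not the larger one as you write. In (a), producing a good removable node from condition (2) requires an actual argument (the standard one takes the $<_{\bf s}$-maximal removable node and uses (2) to show no boundary node of the same residue above it can cancel it --- compare Proposition \ref{pf}(2) of the paper), and the assertion that its removal stays in FLOTW is again the whole difficulty ``read backwards.'' As it stands the proposal is a correct plan, not a proof; to complete it you would need to carry out the local case analysis on the boundary sequence for both conditions, or else do what the paper does and cite the result.
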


 \subsection{Properties of Uglov bipartitions} 
 In the following, we will need several technical properties of Uglov bipartitions.  Let ${\bf s}\in \mathbb{Z}^2$
  and $\ulambda \in \Uglov{e}{{\bf s}}$. 
  
  \begin{abs}
  Assume that for  $\mathfrak{j}\in I$, we have a list of exactly $m$ normal $\mathfrak{j}$-nodes:
  $$\eta_1 <_{\bf s} \eta_2 <_{\bf s} \ldots <_{\bf s} \eta_m,$$
  then if follows from the definition of the crystal isomorphism (see \S \ref{bij})  that 
    for all ${\bf s}'\in \mathbb{Z}^2$ and $\Psi^e_{{\bf s}\to \sigma.{\bf s}'} (\ulambda)=:\umu $, we have a list 
  of exactly  $m$ normal $\mathfrak{j}$-nodes in $\umu$:
  $$\eta_1' <_{\bf s} \eta_2 '<_{\bf s} \ldots <_{\bf s} \eta_m'$$
For all $1\leq i\leq m$, one can thus canonically associate to the normal 
  $\mathfrak{j}$-node $\eta_i$ of  $\ulambda$ the normal   $\mathfrak{j}$-node $\eta_i$ of $\umu$. 
  \end{abs}
  
   \begin{abs}\label{per}
   Let $\gamma<_{\bf s} \gamma'$ be two removable $\mathfrak{j}$-nodes 
   with $\mathfrak{j}\in I$.  We say that $\gamma$ and $\gamma'$ are {\it $(1)$-connected } 
  if we have the following property:  if we remove $\gamma'$ from $\ulambda$ then 
   there exists 
   a set of nodes
$(\gamma_1,\ldots,\gamma_e)$ in the vertical boundary of the extended Young diagram of  the resulting bipartition $\ulambda'$, with 
 for all $i\in \{1,\ldots,e-1\}$, we have $\operatorname{cont}(\gamma_{i+1})=\operatorname{cont} (\gamma_i)+1$ and $c_{i+1}\geq c_i$.
  In this case,  we say that we have {\it period} in $\umu$ and we have  $\ulambda'\notin \Uglov{e}{{\bf s}}$ by \cite[Prop. 5.1]{JL}.
    In particular, it is easy to see that having a period for a bipartition satisfying $(1)$ in Def. \ref{flotwdef} is equivalent to 
     violate the condition $(2)$ in Def. \ref{flotwdef}.

Assume in addition that $\gamma$ and $\gamma'$ are normal $\mathfrak{j}$-nodes.  
 Let ${\bf s}'$ be in the orbit of ${\bf s}$ and take $\umu:=\Psi^e_{{\bf s}\to  {\bf s}'} (\ulambda)$. Then 
  $\gamma$ and $\gamma'$ correspond to two normal nodes 
   in $\umu$  which we denote by $\eta$ and $\eta'$. It follows from the 
    combinatorial description of the bijections that we cannot have 
     a $\mathfrak{j}$-node of the vertical boundary $\eta''$, consecutive to $\eta$ and  such that $\eta\prec_{\bf s} \eta''$. Indeed, in this case, one may check that we obtain 
 a period in $\umu$ which contradicts the fact that $\umu\in \Uglov{e}{{\bf s}'}$

%

 \end{abs}
 \begin{exa} The following example illustrates how the  bijection  $ \Psi^e_{{\bf s}\to  \sigma_1 {\bf s}}$   acts on the property of being $(1)$-connected. 
 is  
 
 Let ${\bf s}=(0,1)$, $e=3$ and consider $\ulambda =(3.2.2.1.1,3.3.1)$. 
 
  \vspace{1cm}  
  
  \centerline{
 \Bigg(\  \ \ \ \ytableausetup
{mathmode}\begin{ytableau}
\none  &  \none  & \none  & \none & 1 & \none[\dots]& \none[\dots] \\
 \none &0 & 1   & 2 \\
\none & 2  & 0\\  
\none & 1  & 2\\
\none & 0 \\
\none & 2  \\
0  \\
\none[\vdots] \\
 \none[\vdots] \\
\end{ytableau},
\begin{ytableau}
\none  &  \none  & \none   & \none & 2 & \none[\dots]& \none[\dots] \\
\none & 1&  2& 0 \\
\none & 0 & 1  & 2 \\
\none & 2 \\
1  \\
0 \\
2  \\
\none[\vdots] \\
 \none[\vdots] \\
\end{ytableau}\  \ \ \  \Bigg)}
 We see that the node $(1,3,1)$ is $(1)$-connected  to $(3,2,1)$ which is itself $(1)$-connected  to $(5,1,1)$; and $(2,3,2)$  is $(1)$-connected  to $(3,2,1)$. 
 Now we have that $\Psi^3_{(0,1)\to (1,0)}(3.2.2.1.1,3.3.1)=(3.3.2.2.1.1,3.1)$. 
   \vspace{1cm}  
  
  \centerline{
 \Bigg(\  \ \ \ \ytableausetup
{mathmode}\begin{ytableau}
\none  &  \none  & \none  & \none & 2 & \none[\dots]& \none[\dots] \\
 \none &1 & 2   & 0 \\
\none & 0  & 1&2   \\
\none & 2  & 0\\
\none & 1  &2\\
\none & 0  \\
\none & 2  \\
0  \\
\none[\vdots] \\
 \none[\vdots] \\
\end{ytableau},
\begin{ytableau}
\none  &  \none  & \none   & \none & 2 & \none[\dots]& \none[\dots] \\
\none & 0&  1& 2 \\
\none & 2 \\
0 \\
2  \\
\none[\vdots] \\
 \none[\vdots] \\
\end{ytableau}\  \ \ \  \Bigg)}
  We see that the node $(2,3,1)$ is $(1)$-connected  to $(4,2,1)$ which is itself $(1)$-connected  to $(6,1,1)$; and $(1,3,2)$  is $(1)$-connected  to $(4,2,1)$. 
 
 \end{exa}

\subsection{Main result} 

The main result of this paper is:

\begin{Th}[Generalized Dipper-James-Murphy's conjecture]\label{djmc}
     Let $e\in \mathbb{Z}_{>1}\sqcup\{\infty\}$ and ${\bf s}\in \mathbb{Z}^2$ then $\ulambda \in\Uglov{e}{\bf s}(n)$    if and only if there exists  a sequence of residues    
$\mathfrak{i}_1,\ldots,\mathfrak{i}_n$ in $I$ and  
integers $c_{\ulambda,\umu}$ for $\umu\in \mathcal{P}^2 (n)$  such that:
$$f_{\mathfrak{i}_1}\ldots f_{\mathfrak{i}_n}.\uemptyset =c_{\ulambda,\ulambda}\ulambda+\sum_{\umu\prec_{\bf s} \ulambda}  c_{\ulambda,\umu} \umu.$$
with $c_{\ulambda,\ulambda}\neq 0$.
\end{Th}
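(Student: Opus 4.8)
The implication ``$\Leftarrow$'' needs nothing new: under the stated hypothesis $\ulambda$ is exactly the $\prec_{\bf s}$-maximal bipartition occurring with nonzero coefficient in $f_{\mathfrak i_1}\cdots f_{\mathfrak i_n}.\uemptyset$, hence is an Uglov bipartition by the Lemma of \S\ref{cb}. For ``$\Rightarrow$'' the plan is to first prove the statement when ${\bf s}\in\mathcal S_e$ (the FLOTW bipartitions) and then transport it to an arbitrary ${\bf s}$ along the crystal isomorphisms $\Psi^e_{{\bf s}\to{\bf s}'}$ of \S\ref{bij}. I will assume $e$ finite, the case $e=\infty$ being similar and simpler (only $\sigma_1$ is then needed).

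For ${\bf s}\in\mathcal S_e$ and $\ulambda\in\Uglov{e}{{\bf s}}(n)$ with $n>0$ I would induct on $n$, writing $\ulambda=\ulambda^{-}\sqcup\{\gamma\}$ where $\gamma$ is the good $\mathfrak j$-node added last in a good building of $\ulambda$, so that $\ulambda^{-}\in\Uglov{e}{{\bf s}}(n-1)$ is again FLOTW. The two facts I would use are: (a) adding an addable node of content $j$ to a bipartition replaces, on the vertical boundary of its component, the content-$(j-1)$ node by a content-$j$ one, so that (comparing boundary sequences, equivalently charged $\beta$-sets) $f_{\mathfrak j}$ strictly increases every bipartition for $\preceq_{\bf s}$ and the $\prec_{\bf s}$-maximal summand of $f_{\mathfrak j}\umu$ is obtained by adding the $<_{\bf s}$-largest addable $\mathfrak j$-node of $\umu$; and (b) for a FLOTW bipartition the good $\mathfrak j$-node \emph{is} that $<_{\bf s}$-largest addable $\mathfrak j$-node, since otherwise a removable $\mathfrak j$-node together with an addable $\mathfrak j$-node of strictly larger content (their contents differing by a multiple of $e$) would produce a period in the sense of~\ref{per}, contradicting Proposition~\ref{flotwdef}(2). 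Taking by induction a residue sequence making $\ulambda^{-}$ the $\prec_{\bf s}$-maximal term of $f_{\mathfrak i_1}\cdots f_{\mathfrak i_{n-1}}.\uemptyset$ and appending $\mathfrak i_n=\mathfrak j$, one then checks using Proposition~\ref{max} together with (a) and (b) that $\ulambda$ occurs with nonzero coefficient in $f_{\mathfrak i_1}\cdots f_{\mathfrak i_n}.\uemptyset$ while nothing $\prec_{\bf s}$-above it does; the only real point is that a bipartition $\prec_{\bf s}\ulambda^{-}$ cannot reach $\ulambda$ after adding one $\mathfrak j$-node, and this is where the non-recursive cylindric description of Proposition~\ref{flotwdef}(1) makes the FLOTW case elementary.

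For the general case, fix ${\bf s}\in\mathbb Z^2$, pick ${\bf s}'\in\mathcal S_e$ in its $\widehat{\mathfrak S}_2$-orbit, and set $\Psi:=\Psi^e_{{\bf s}\to{\bf s}'}$. Given $\ulambda\in\Uglov{e}{{\bf s}}(n)$, let $\ulambda'=\Psi(\ulambda)$ (which is FLOTW) and let ${\bf i}$ be a residue sequence, from the previous step, making $\ulambda'$ the $\prec_{{\bf s}'}$-maximal term of $f_{\mathfrak i_1}\cdots f_{\mathfrak i_n}.\uemptyset$. By Remark~\ref{A2}, the \emph{same} ${\bf i}$ gives $f_{\mathfrak i_1}\cdots f_{\mathfrak i_n}.\uemptyset=\sum_{\umu\in\Uglov{e}{{\bf s}}}a_{\umu}G(\umu,{\bf s})$ with the coefficients $a_{\umu}$ equal to those of the $G(\Psi(\umu),{\bf s}')$ in the FLOTW picture; hence $a_{\ulambda}\neq 0$ and $a_{\umu}=0$ whenever $\Psi(\umu)\succ_{{\bf s}'}\ulambda'$. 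If one knows that $\Psi$ preserves $\preceq$ on Uglov bipartitions (the Key Lemma below), then $\Psi$, being an order-preserving bijection between the finite totally ordered sets $(\Uglov{e}{{\bf s}}(n),\preceq_{\bf s})$ and $(\Uglov{e}{{\bf s}'}(n),\preceq_{{\bf s}'})$, is an order isomorphism, so $\umu\succ_{\bf s}\ulambda$ forces $\Psi(\umu)\succ_{{\bf s}'}\ulambda'$ and thus $a_{\umu}=0$; therefore $\ulambda$ is the $\prec_{\bf s}$-maximal term of $f_{\mathfrak i_1}\cdots f_{\mathfrak i_n}.\uemptyset$, which is the desired conclusion.

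It remains — and this is the main obstacle — to establish the Key Lemma: each elementary bijection $\Psi^e_{{\bf t}\to\tau.{\bf t}}$ and $\Psi^e_{{\bf t}\to\sigma_1.{\bf t}}$ (for ${\bf t}\in\mathbb Z^2$ arbitrary) preserves $\preceq$ on Uglov bipartitions, whence so does any composition $\Psi$. For $\sigma=\tau$ this is easy: $(\lambda^1,\lambda^2)\mapsto(\lambda^2,\lambda^1)$ with $\tau.{\bf t}=(t_2,t_1+e)$ leaves the contents of the old second component fixed and shifts those of the old first component by $e$, so the claim follows by comparing boundary sequences (or from the nature-translation rule of \S\ref{bij}). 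For $\sigma=\sigma_1$ the plan is to use that $\preceq_{\bf t}$ is read off by scanning the ``general natures'' ($\{R,B_v\}$ versus $\{A,B_h\}$) in the table of natures from the left (\S\ref{Nord}), together with the content-by-content transformation table of \S\ref{bij} (from \cite[\S6.1.2]{J}) for $\Psi^e_{{\bf t}\to\sigma_1.{\bf t}}$, and to show that the first content at which the general natures of $\Psi(\umu)$ and $\Psi(\unu)$ disagree, together with its orientation, is dictated by the first content at which those of $\umu$ and $\unu$ disagree. The rows of the table where a naive argument would break are exactly the ambiguous ones (a $(B_v,B_h)$ pair becoming $(R,A)$, a $(B_v,R)$ pair becoming $(R,B_v)$, and so on), and ruling these out is precisely where the property of~\ref{per} for Uglov bipartitions — no $\mathfrak j$-node of the vertical boundary lying consecutive to and $\prec_{\bf t}$-above a normal $\mathfrak j$-node — must be invoked. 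I expect this content-by-content bookkeeping for $\sigma_1$, rather than anything in the reduction steps, to be the hard part.
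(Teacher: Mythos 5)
Your overall architecture --- settle the FLOTW case first, then transport along the crystal isomorphisms $\Psi^e_{{\bf s}\to\sigma.{\bf s}}$ of \S\ref{bij} --- is the same as the paper's, and your treatment of the easy direction via the lemma of \S\ref{cb} is fine. But there is a genuine gap exactly where you locate ``the main obstacle'': your Key Lemma, that each elementary bijection $\Psi^e_{{\bf t}\to\sigma_1.{\bf t}}$ preserves $\preceq$ on Uglov bipartitions, is not proved, and it cannot be deferred --- it carries the entire weight of the hard direction, and it is a strictly stronger statement than anything the paper establishes. Your plan is to scan the tables of natures and rule out the ambiguous rows of the transformation table in \S\ref{bij} (for instance $(B_v,B_h)\mapsto(B_h,B_v)$, which flips the general natures and hence can move or reverse the first point of disagreement) by invoking \ref{per}. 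But the control coming from \ref{per} and Proposition~\ref{propb} is simply not available at an arbitrary content for an arbitrary pair of Uglov bipartitions: those statements concern only the normal $\mathfrak{j}$-nodes sitting at the top of one batch of the admissible sequence (no addable $\mathfrak{j}$-node above $\eta_1$; no $B_v$ $\mathfrak{j}$-node above $\eta_1$ when a non-virtual $B_h$ one exists). Neither your sketch nor the paper excludes the ambiguous rows at a generic position, so the Key Lemma remains unsubstantiated, and it is not even clear that it is true in the generality you need it.

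The paper avoids ever having to compare $\Psi(\umu)$ and $\Psi(\unu)$ for arbitrary pairs. It attaches to each Uglov bipartition a specific sequence $\operatorname{Adm}(\ulambda)$, built in the FLOTW case by removing whole $\equiv$-classes of $(1)$/$(2)$-connected removable nodes of a fixed residue (not one good node at a time, as you do), proves that this sequence and its associated normal nodes are invariants of the isomorphism class, and establishes Proposition~\ref{propb} precisely for those nodes. The induction then strips off the last batch, applies the induction hypothesis to $\widetilde{\ulambda}^{\sigma}$ and $\widetilde{\umu}^{\sigma}$, and derives a contradiction from the tables of natures at the single first point of disagreement --- which the construction forces into the range where Proposition~\ref{propb} applies. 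So the content-by-content bookkeeping you postpone is only ever needed at one controlled location and only for the distinguished sequence $\operatorname{Adm}(\ulambda)$; this is the idea your proposal is missing. Two secondary remarks: the paper does not reprove the FLOTW case but quotes a weak form of \cite[Lemma 5.7.20]{GJ}, whereas your claim that the good $\mathfrak{j}$-node of a FLOTW bipartition is its $<_{\bf s}$-largest addable $\mathfrak{j}$-node is asserted without proof and is not what Proposition~\ref{pf}(2) says (that concerns the largest \emph{removable} node dominating the addable ones); and your one-node-at-a-time residue sequence would in any case have to be replaced by the batched admissible sequence for the paper's transport argument to go through.
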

See also section \ref{refo}, for a reformulation of this result.  
We will also see several consequences of this result. The aim of this rest of the paper is to prove this Theorem thanks to the previous preparatory materials  and to present two consequences.

\section{Proof of the main result}
We now prove Theorem \ref{djmc}.  The main point is to associate to each Uglov bipartition, a certain sequence of residues 
 which will play the role of the sequence $\mathfrak{i}_1,\ldots,\mathfrak{i}_n$ in $ I$ in the Theorem. 
 The important property about this sequence is that it will be an invariant   on the isomorphism  class of an   Uglov bipartition.

\subsection{Admissible residue sequence: FLOTW case}

In this subsection, we let ${\bf s}=(s_1,s_2)\in \mathcal{S}_e$ and $\ulambda \in\Uglov{e}{\bf s}$. 
First let us give another definition which will be necessary to define our sequence of nodes.  Let  $\gamma_1:=(a,b,c)$ be a removable $\mathfrak{j}$-node for some $\mathfrak{j}\in  I$. 
  \begin{itemize}
  \item If $c=1$ and   $\lambda_a^{1}={\lambda^{2}_{a+s_{2}-s_{1}}}$ then we set  $\gamma_2=(a+s_2-s_1,{\lambda^{2}_{a+s_{2}-s_{1}}},2)$. 
  \item If $c=2$ and $\lambda^{2}_a= {\lambda^{1}_{a+e+s_1-s_{2}}}$ then we set  $\gamma_2=(a+e+s_1-s_2,{\lambda^{1}_{a+e+s_{1}-s_{2}}},1)$.   
  \end{itemize}
then we say that $\gamma_1$ and $\gamma_2$ are {\it $(2)$-connected}.  
 If $\gamma$ and $\eta$ are two removable $\mathfrak{j}$-nodes then we set $\gamma \equiv  \eta$ if $\gamma$ and $\eta$ are $(1)$ or $(2)$ {\it connected } and we consider the transitive closure of this relation.

Now let $\ulambda=(\lambda^1,\lambda^2)$ be in $\Uglov{e}{\bf s}$.
Consider the maximal removable node $\gamma_1=(a,b,c)$  with respect to $<_{\bf s}$  and denote by  $\mathfrak{j}$ its residue. Note that by the definition of FLOTW $l$-partition, there cannot exist a node on the vertical boundary with the same residue greater than $\gamma_1$. 
  We now consider the sequence of  removable   $\mathfrak{j}$-nodes  
 given by all the removable $\mathfrak{j}$-nodes in the equivalence class. 
We write it  $(\gamma_1,\ldots,\gamma_r)$ (written in increasing order).

  \begin{Prop}\label{pf}
  Under the above hypotheses, we have the following properties:
  \begin{enumerate}
  \item for all $k=1,\ldots, r$ the bipartition obtained by removing the  $\mathfrak{j}$-nodes $(\gamma_k,\ldots,\gamma_r)$
   from $\ulambda$ is in $\Uglov{e}{\bf s}$. 
  \item  $\gamma_1$ is greater to any addable $\mathfrak{j}$-node of $\ulambda$ and any $\mathfrak{j}$-node of the vertical boundary of $\ulambda$
  \item if there exists a non virtual $\mathfrak{j}$-node of the horizontal boundary which is greater than $\gamma_1$ then $\gamma_1$ is $(1)$ {\it connected} with a node of the sequence 
   $(\gamma_2,\ldots,\gamma_{r-1})$
   \end{enumerate}
  \end{Prop}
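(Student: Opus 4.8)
The plan is to prove the three assertions of Proposition~\ref{pf} by unwinding the combinatorics of FLOTW bipartitions and the equivalence relation $\equiv$ on removable $\mathfrak{j}$-nodes, exploiting the explicit characterization in Proposition~\ref{flotwdef} and the description of periods in \S\ref{per}.

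First I would set up the picture for $\gamma_1$. Since $\ulambda\in\Uglov{e}{\bf s}$ with ${\bf s}\in\mathcal S_e$, condition $(2)$ of Proposition~\ref{flotwdef} forbids a full ``period'' of residues on the vertical boundary at any fixed row-length $k$. Taking $\gamma_1=(a,b,c)$ the $<_{\bf s}$-maximal removable node and $\mathfrak j$ its residue, I would argue that no node of the vertical boundary in the same component can have residue $\mathfrak j$ and be $>_{\bf s}\gamma_1$: any such node would be addable (not removable, by maximality of $\gamma_1$ among removables), but an addable node of residue $\mathfrak j$ sitting above $\gamma_1$ on the vertical boundary would force, together with the structure of the extended diagram at length $b$, the appearance of all residues $0,\dots,e-1$, contradicting $(2)$. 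This gives part $(2)$: $\gamma_1$ is $>_{\bf s}$ every addable $\mathfrak j$-node and every $\mathfrak j$-node of the vertical boundary. The table of natures (nature of content $j\pm1$ given the nature at $j$) is the tool here — I would trace through the $A/R/B_v/B_h$ possibilities to pin down which configurations near a would-be larger $\mathfrak j$-node are compatible with $(1)$–$(2)$.

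For part $(1)$, I would proceed by downward induction: removing $(\gamma_k,\dots,\gamma_r)$ and checking both FLOTW conditions persist. Condition $(1)$ of Proposition~\ref{flotwdef} is exactly preserved by the $(2)$-connectedness bookkeeping: when we remove a removable node $\gamma_c$ in component $c$ that meets the relevant equality $\lambda_a^1=\lambda^2_{a+s_2-s_1}$ (or the analogous one), the equivalence class contains its $(2)$-partner $\gamma_{c'}$, which is removed as well, so the inequality $\lambda_i^1\ge\lambda^2_{i+s_2-s_1}$ is never violated; the $(1)$-connectedness clauses handle the rows where equality could only be destroyed through a period. For condition $(2)$, the observation in \S\ref{per} — having a period is equivalent to violating $(2)$ — lets me phrase the argument as: if removing $(\gamma_k,\dots,\gamma_r)$ created a period, then there is a chain $(\gamma_1',\dots,\gamma_e')$ on the vertical boundary of the result with consecutive contents and non-decreasing components; but such a chain would mean one of the $\gamma_i$ is $(1)$-connected to a removable node not yet removed, or to an addable node above $\gamma_1$, contradicting either the definition of the equivalence class (which is $\equiv$-closed) or part $(2)$. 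So I would formalize ``the $\equiv$-class is exactly the set of removable nodes that must come off together to stay FLOTW''.

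Part $(3)$ is the delicate one and I expect it to be the main obstacle. The claim is that if after removing nothing there is a non-virtual $\mathfrak j$-node of the \emph{horizontal} boundary lying $>_{\bf s}\gamma_1$, then $\gamma_1$ is $(1)$-connected to some intermediate node $\gamma_i$ with $2\le i\le r-1$ — in particular $r\ge 3$ and the class is not a singleton. The idea is that a horizontal-boundary $\mathfrak j$-node $\delta$ with $\delta>_{\bf s}\gamma_1$ has nature $B_h$ (it cannot be removable since $\gamma_1$ is the maximal removable, and it cannot be $B_v$ or $A$ by the constraints), and reading the table of natures backwards from $\delta$ down to $\gamma_1$ — which is removable, nature $R$ — forces a sequence of $B_h$'s interrupted somewhere, and the only way to reconcile ``the maximal removable is $\gamma_1$'' with ``there is a bigger $B_h$'' without violating FLOTW is that a period is about to form below $\delta$, i.e. $\gamma_1$ participates in a $(1)$-connection chain of length $\ge e$ whose top is forced by $\delta$. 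Concretely I would: (i) show $\delta$ being non-virtual means the component $c_\delta$ has a genuine part reaching that content; (ii) use that removing $\gamma_1$ alone keeps us FLOTW only if the $(1)$-connection does not close up a period, which combined with the presence of $\delta$ pins the connection to run through a node strictly between $\gamma_1$ and $\gamma_r$; (iii) check the boundary cases $r=1,2$ are impossible under the hypothesis. The bookkeeping with the indices $a+s_2-s_1$ and $a+e+s_1-s_2$, and keeping track of which component each node of the chain lives in, is where the real work lies; I would organize it around the single invariant that the chain of contents $\operatorname{cont}(\gamma_i)+1=\operatorname{cont}(\gamma_{i+1})$ with non-decreasing components cannot have length $e$ in a FLOTW bipartition.
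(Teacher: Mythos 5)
Your toolkit is the right one---Proposition \ref{flotwdef}, periods, and the $(1)$/$(2)$-connection structure---and it is also the paper's, but two of your steps do not actually close. For part $(1)$, the claim that condition $(1)$ of Proposition \ref{flotwdef} survives because ``the equivalence class contains its $(2)$-partner, which is removed as well'' is precisely the point that needs proof, and it is false unless the removal order is controlled. A content computation shows that if $\gamma=(a,\lambda^1_a,1)$ satisfies $\lambda^1_a=\lambda^2_{a+s_2-s_1}$, then its $(2)$-partner has the \emph{same} content but lives in component $2$, hence is strictly $<_{\bf s}$-smaller (and one checks it is removable of the same residue, so it is the member of the class immediately below $\gamma$); in the other case the partner's content drops by $e$. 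Consequently, deleting the larger member of a $(2)$-connected pair while keeping the smaller one destroys condition $(1)$: with $e=3$, ${\bf s}=(0,2)$, $\ulambda=(3.1.1,3.2.2.1.1)$, removing $(3,1,1)$ without removing $(5,1,2)$ gives $\lambda^1_3=0<1=\lambda^2_5$. So the segment $(\gamma_k,\ldots,\gamma_r)$ must consist of the $<_{\bf s}$-smallest members of the class, and you must prove this directional fact about $(2)$-partners; without it the induction has nothing to stand on. (Also, in part $(2)$ you call a vertical-boundary node ``addable''; a node of the vertical boundary is in the extended diagram and is of nature $B_v$ or $R$, never $A$---the period argument you want is the right one, but it must be run separately for the $A$ and $B_v$ cases.)

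For part $(3)$, your step (ii)---``the only way to reconcile the presence of $\delta$ with FLOTW is that a period is about to form, pinning the connection to an interior node''---restates the conclusion rather than deriving it. The mechanism the paper uses is concrete: take the $<_{\bf s}$-\emph{smallest} non-virtual $B_h$ node $\delta$ of residue $\mathfrak{j}$ above $\gamma_1$, note it cannot exceed the largest member of the class, and show that $\delta$ must lie strictly between two \emph{consecutive} members of the class which are $(1)$-connected to each other (this is where condition $(2)$, i.e.\ the non-existence of periods, enters); then propagate downwards: the lower member of that pair is itself forced to be $(1)$-connected to a yet lower member of the class, and the descent terminates with $\gamma_1$ being $(1)$-connected to an interior node $\gamma_i$, $2\le i\le r-1$. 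Your sketch never isolates the ``between two consecutive $(1)$-connected members'' step nor the descent, and without them you cannot exclude, say, that $\gamma_1$ is $(1)$-connected only to the extreme node $\gamma_r$ or to nothing at all. Filling in exactly these two links is what a complete proof requires.
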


  \begin{proof}
  Point $1$ just follows from the definition of FLOTW bipartitions.  Point $2$  has already been stated (it also follows from \cite[Lemma 4.2.5]{J} and \cite[Lemma 4.2.6]{J}). 
   Let us consider the last point. Assume that  there exists a  non virtual $\mathfrak{j}$-node of the horizontal boundary which is greater than $\gamma_1$ and take the smallest such node (note that such node cannot be greater than $\gamma_r$).  This situation occurs only in the case where this node is between  two nodes $\gamma_i$ and $\gamma_{j}$, with $j<i$  which are $(1)$-connected. From the properties of FLOTW bipartitions, we  see that 
    $\gamma_{j}$ must be $(1)$-connected with a node  $\gamma_{k}$, with $k<j$  etc. (if it is $(1)$-connected to its successive node then it must be 
      $(1)$-connected with another by the definition of FLOTW bipartitions) and  then the result follows.

\end{proof}
\begin{exa}
We illustrate the above proof with an example : let $e=3$, ${\bf s}=(0,2)$ and $\ulambda=(3.1.1,3.2.2.1.1)$ which is in $\Uglov{e}{\bf s}$. Write its extended Young diagram:

\vspace{1cm}

  \centerline{
 \Bigg(\  \ \ \ \ytableausetup
{mathmode}\begin{ytableau}
\none  &  \none  & \none  & \none & 1 & \none[\dots]& \none[\dots] \\
 \none &0 & 1   & 2 \\
\none & 2 \\
\none & {\bf 1}  \\
2  \\
\none[\vdots] \\
 \none[\vdots] \\
\end{ytableau},
\begin{ytableau}
\none  &  \none  & \none   & \none & 2 & \none[\dots]& \none[\dots] \\
\none & 2&  0& {\bf 1} \\
\none & 1&  2  \\
\none & 0&  {\bf 1}  \\
\none & 2 \\
\none & {\bf 1} \\
2 \\
\none[\vdots] \\
 \none[\vdots] \\
\end{ytableau}\  \ \ \  \Bigg)}

We here have $\gamma_1=(5,1,2)$, $\gamma_2=(3,1,1)$, $\gamma_3=(3,2,2)$, $\gamma_4=(1,3,2)$. 
We have a node $(1,2,1)$ of nature $B_h$ between  $\gamma_3$ and $\gamma_4$ and we see that $\gamma_1$ is $(1)$-connected with $\gamma_3$ which is itself
 $(1)$-connected to $\gamma_4$. 
\end{exa}

The admissible residue  sequence $\operatorname{Adm} (\ulambda)$ of $\ulambda$ is  then defined recursively as follows. Let 
$\ulambda'$ be the FLOTW bipartitions obtained after removing the $\mathfrak{j}$-nodes $(\gamma_1,\ldots,\gamma_r)$ from $\ulambda$ 
 $$\operatorname{Adm} (\ulambda)=\operatorname{Adm} (\ulambda'), \underbrace{\mathfrak{j},\ldots,\mathfrak{j}}_{r\text{ times}}$$

\subsection{Admissible residue sequence: general case}

We now explain how on can extend the notion of admissible residue sequence to the case of an arbitrary Uglov bipartition.

\begin{lemma}
Let ${\bf s}=(s_1,s_2) \in  \mathbb{Z}^2$ and and  let  $\ulambda$ be in $\Uglov{e}{\bf s}$.  For $\mathfrak{j}\in I$, denote by 
$$\gamma_1<_{\bf s} \gamma_2 <_{\bf s}  \ldots <_{\bf s}  \gamma_N$$
the normal $\mathfrak{j}$-nodes of $\ulambda$. Let $\sigma \in \widehat{\mathfrak{S}}_2$. Then 
  $\umu:=\Psi^e_{{\bf s}\to \sigma .{\bf s}} (\ulambda)$ admits exactlty
 $N$ normal $\mathfrak{j}$-nodes:
 $$\eta_1<_{\sigma.{\bf s}} \eta_2 <_{\sigma.{\bf s}}  \ldots <_{\sigma.{\bf s}}  \eta_N.$$
Assume that there exists   $1\leq m\leq N$ such that, for all $N\geq k\geq m$, the bipartition $\ulambda'$ obtained by removing  $\gamma_m$,   \ldots  $\gamma_N$
 from $\ulambda$ is in $\Uglov{e}{\bf s}$. Then the bipartition 
$\umu'$ obtained by removing  $\eta_m$,   \ldots   $\eta_N$
 from $\umu$ is in $\Uglov{e}{\sigma.{\bf s}}$ and we have $\umu'=\Psi^e_{{\bf s}\to \sigma .{\bf s}} (\ulambda')$
\end{lemma}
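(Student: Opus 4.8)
The statement is a compatibility claim: crystal isomorphisms commute with the operation of ``peeling off a tail of normal $\mathfrak{j}$-nodes'', provided that tail stays inside the set of Uglov bipartitions. The plan is to reduce to the case where $\sigma$ is one of the two elementary generators $\tau$ and $\sigma_1$, since any $\sigma\in\widehat{\mathfrak{S}}_2$ is a product of these and both the hypothesis and conclusion are visibly transitive along such a factorization (if $\umu'=\Psi^e_{{\bf s}\to\sigma.{\bf s}}(\ulambda')$ and then we apply a further generator, the intermediate bipartition is again Uglov by induction, so the hypotheses of the lemma are met at each stage). So fix $\sigma$ to be a single generator.

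\textbf{The single removal step.} First I would establish the case $m=N$, i.e. removing a single normal $\mathfrak{j}$-node, and then iterate. Here the key tool is the paragraph of \S\ref{bij} stating that $\Psi^e_{{\bf s}\to\sigma.{\bf s}}$ is a crystal isomorphism: removing the \emph{good} $\mathfrak{j}$-node from $\ulambda$ corresponds, on the $\umu$ side, to removing the good $\mathfrak{j}$-node, with matching resulting bipartition. But the lemma concerns removing an arbitrary \emph{normal} node $\gamma_k$ whose removal keeps us in $\Uglov{e}{\bf s}$, not necessarily the good (= rightmost normal = $\gamma_N$) one. The point is that under the canonical bijection of normal $\mathfrak{j}$-nodes (the first displayed \texttt{abs} block: $\eta_i\leftrightarrow\gamma_i$), removing $\gamma_N,\gamma_{N-1},\ldots,\gamma_m$ successively is the same as successively removing good $\mathfrak{j}$-nodes in a sequence of Uglov bipartitions, since in each intermediate bipartition $\ulambda^{(k)}$ obtained after removing $\gamma_N,\ldots,\gamma_{k+1}$ the node $\gamma_k$ is precisely the rightmost surviving normal $\mathfrak{j}$-node, hence the good one — here I use that the normality count in an Uglov bipartition is governed by the arc/matching structure of $A$'s and $R$'s, so deleting the largest $R$'s strips normal nodes from the top. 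Thus I can write the passage from $\ulambda$ to $\ulambda'$ as a composition of good-node removals, apply the crystal-isomorphism property $N-m+1$ times, and read off both $\umu'\in\Uglov{e}{\sigma.{\bf s}}$ and $\umu'=\Psi^e_{{\bf s}\to\sigma.{\bf s}}(\ulambda')$ from the matched good-node removals on the $\umu$ side; the matched nodes removed there are exactly $\eta_N,\ldots,\eta_m$ because the canonical correspondence of normal nodes is preserved at each step.

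\textbf{Verifying the hypothesis is transported.} The genuinely delicate point — and the one I expect to be the main obstacle — is checking that each intermediate bipartition $\umu^{(k)}$ on the $\umu$ side really is Uglov, so that ``remove the good $\mathfrak{j}$-node'' is legitimate and the crystal-isomorphism bullet of \S\ref{bij} applies; a priori the crystal isomorphism only tells us $\Psi$ behaves well when we remove a good node of something already known to be Uglov. This is handled by downward induction on $k$: $\umu^{(N+1)}=\umu$ is Uglov; if $\umu^{(k+1)}=\Psi^e_{{\bf s}\to\sigma.{\bf s}}(\ulambda^{(k+1)})$ is Uglov and $\ulambda^{(k)}$ (got by removing the good node $\gamma_k$) is Uglov by hypothesis, then the crystal-isomorphism property directly gives $\umu^{(k)}=\Psi^e_{{\bf s}\to\sigma.{\bf s}}(\ulambda^{(k)})\in\Uglov{e}{\sigma.{\bf s}}$. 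One must also confirm that the node removed from $\umu^{(k+1)}$ under this matching is $\eta_k$ and not some other normal node: this follows because the canonical bijection of normal $\mathfrak{j}$-nodes is order-preserving and compatible with good-node removal (the good node is the top one on both sides), so it descends to the canonical bijection of normal nodes of $\ulambda^{(k)}$ and $\umu^{(k)}$, and an easy induction identifies the removed node as the image of $\gamma_k$, namely $\eta_k$. Finally I would record the base case $m$ arbitrary versus $m=N$: the two formulations agree, since the hypothesis ``for all $N\ge k\ge m$, removing $\gamma_m,\ldots,\gamma_N$ gives an Uglov bipartition'' is exactly what makes every intermediate $\ulambda^{(k)}$ Uglov, which is all the induction needs.
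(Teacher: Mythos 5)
Your reduction to the two generators $\tau$ and $\sigma_1$, and the remark that the $\tau$ case is immediate (a component swap), match the paper. The gap is in the core step for $\sigma=\sigma_1$: you assert that removing $\gamma_N,\gamma_{N-1},\ldots,\gamma_m$ successively is a sequence of good-node removals for the crystal at the given $e$, because ``in each intermediate bipartition the node $\gamma_k$ is the rightmost surviving normal node, hence the good one.'' This is not correct. Under the paper's convention (read the addable and removable $\mathfrak{j}$-nodes in increasing order, cancel a removable immediately followed by an addable, reduced word $A^pR^q$, good addable node $=$ rightmost $A$), the good \emph{removable} node --- the one deleted by $\widetilde{e}_{\mathfrak{j}}$ --- is the \emph{smallest} normal node $\gamma_1$, not $\gamma_N$; the lemma instead strips the top segment of the normal nodes, which is not a composition of crystal operators for finite $e$. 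Two convention-independent checks confirm that your identification cannot be right: (i) removing a good node from an Uglov bipartition always yields an Uglov bipartition (the Uglov bipartitions form the connected component of the crystal containing $\uemptyset$), so if the lemma's removals were good-node removals its hypothesis would be vacuous; (ii) the discussion in \S\ref{per} exhibits precisely the situation where removing a \emph{larger} removable $\mathfrak{j}$-node creates a period and leaves $\Uglov{e}{\bf s}$ --- the hypothesis of the lemma excludes those cases, but satisfying it does not turn the removal into an application of $\widetilde{e}_{\mathfrak{j}}$. Consequently your downward induction has no way to legitimise the individual steps on the $\umu$ side: the crystal-isomorphism bullet of \S\ref{bij} simply does not apply to these removals.

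The missing idea, which is the substance of the paper's proof, is the theorem of Jacon--Lecouvey \cite{JL} that the bijection $\Psi^e_{{\bf s}\to\sigma_1.{\bf s}}$ does not depend on $e$. One therefore computes it at $e=\infty$, where residues coincide with contents and at most two nodes carry a given residue; there each $\gamma_k$ (or the pair $\gamma_m,\gamma_{m+1}$ when both components contribute) \emph{is} a good node for its own content-residue by \cite[Prop.~4.1.1]{JL} --- and it is exactly here that the hypothesis of the lemma is used. The crystal-isomorphism property can then be applied node by node at $e=\infty$, and the conclusion is transported back to finite $e$ by $e$-independence. Without this detour the argument does not close.
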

\begin{proof}
First note that it is sufficient to prove the lemma in the case where  $\sigma=\sigma_1$ (in the case where $s_1\leq s_2$) and $\sigma=\tau$.  The lemma is trivial in the case where $\sigma=\tau$ because then 
 $\umu=(\lambda^2,\lambda^1)$ and the bijection leave  the order on $\mathfrak{j}$-nodes invariant (see \cite[\S 6.1.1]{J}). Let us thus consider the case $\sigma=\sigma_1$ and $s_1\leq s_2$.  Taking the notations of the theorem,  we need to show that $\Psi^e_{{\bf s}\to \sigma .{\bf s}} (\ulambda')$
 is the bipartition obtained by removing the $N-m+1$  greatest normal  $\mathfrak{j}$-nodes of $\umu$.  To do this, we will essentially use the main result of \cite{JL}, which has been already mentioned and  which asserts that 
  the isomorphism $\Psi^{e}_{{\bf s}\to \sigma .{\bf s}}$ does not depend on $e$.  We can thus take $e=\infty$ to compute it. In this case, the residue of the node becomes its content and we thus have only two possible nodes with a given residue.  

Assume that the node $\gamma_m$ is on component $1$ and that its content is $j\in \mathbb{Z}$. By \cite[Prop. 4.1.1]{JL}, we have that $\ulambda \in  \Uglov{\infty}{\bf s}$ and  $\gamma_m$ is a good $j$-node for $e=\infty$. Moreover, we have  $\umu=\Psi^{\infty}_{{\bf s}\to \sigma .{\bf s}} (\ulambda)$ and the bipartition $\umu'$  obtained by removing the unique $j$-node from $\umu$ satisfies
 $\umu'=\Psi^{\infty}_{{\bf s}\to \sigma .{\bf s}} (\ulambda')$  where $\ulambda'$ is the bipartition obtained by removing $\gamma_m$ from $\ulambda$.  This follows from the fact that 
  $\Psi^{\infty}_{{\bf s}\to \sigma .{\bf s}}=\Psi^{e}_{{\bf s}\to \sigma .{\bf s}} $  is a crystal isomorphism. In addition, we have 
 $\umu'=\Psi^{e}_{{\bf s}\to \sigma .{\bf s}} (\ulambda')$   as this map does not depend on $e$. 
 
 If $\gamma_m$ is on component $2$ and if  its content is $j\in \mathbb{Z}$ and if there is no other removable node of content $j$, we conclude in the same manner. If we have two removable nodes $\gamma_m$ and $\gamma_{m+1}$ with content $j\in \mathbb{Z}$, we argue again in the same manner by removing these two nodes (which are 
 are successively good nodes in the case $e=\infty$). We conclude by induction. 

\end{proof}

Let ${\bf s}=(s_1,s_2) \in \mathbb{Z}^2$.  Let  $\umu$ be in $\Uglov{e}{\bf s}$. 
 Let ${\bf s}'$ be the element in  $\mathcal{S}_e$ 
 which is in the orbit of ${\bf s}$ modulo the action of $\widehat{\mathfrak{S}}_2$. Let $\ulambda:=\Psi^e_{{\bf s}\to {\bf s}'} (\umu)$.
 Consider the admissible residue sequence of $\ulambda$, then we define the admissible residue sequence of $\umu$ to be this residue sequence. 
We know that the normal $\mathfrak{j}$-nodes:
  $$\gamma_1<_{{\bf s}'} \gamma_2 <_{{\bf s}'}  \ldots <_{{\bf s}'}  \gamma_N$$
 associated to the admissible residue sequence of $\ulambda$ 
  are canonically associated to normal $\mathfrak{j}$-nodes: 
 $$\eta_1<_{\bf s} \eta_2 <_{\bf s}  \ldots <_{\bf s}  \eta_N$$ 
   of $\umu$.  By the result above,  if we remove these nodes from $\umu$ then we still have an Uglov bipartition which is 
    in the isomorphism class of the bipartition obtained by removing 
   $(\gamma_1, \gamma_2 ,  \ldots ,  \gamma_N)$ from $\ulambda$. 
   This sequence of nodes  have others interesting properties.
   \begin{Prop}\label{propb} Under the above hypotheses, 
\begin{enumerate}
\item there is no addable $\mathfrak{j}$-node in $\umu$ greater than $\eta_1$.  
\item  If there is a non virtual $\mathfrak{j}$-node of nature  $B_h$ greater than 
 $\eta_1$ in $\umu$  then there is no $\mathfrak{j}$-node of nature $B_v$ greater than $\eta_1$. 

 \end{enumerate}
   \end{Prop}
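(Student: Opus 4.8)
The proof proceeds by transporting the two properties from the FLOTW bipartition $\ulambda$ (where they are essentially Proposition~\ref{pf}) to $\umu$ via the crystal isomorphism $\Psi^e_{{\bf s}\to {\bf s}'}$, using the preceding lemma to identify the relevant normal nodes and the tables in \S\ref{bij} to track how natures transform. First I would recall that by construction $\ulambda = \Psi^e_{{\bf s}\to {\bf s}'}(\umu)$ with ${\bf s}'\in \mathcal{S}_e$, that $\gamma_1<_{{\bf s}'}\cdots<_{{\bf s}'}\gamma_N$ are the normal $\mathfrak{j}$-nodes produced at the top stage of $\operatorname{Adm}(\ulambda)$, and that $\eta_1<_{\bf s}\cdots<_{\bf s}\eta_N$ are their canonical counterparts in $\umu$. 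By Proposition~\ref{pf}(2), in $\ulambda$ the node $\gamma_1$ is greater (with respect to $<_{{\bf s}'}$) than every addable $\mathfrak{j}$-node and every $\mathfrak{j}$-node of the vertical boundary; by Proposition~\ref{pf}(3), if some non-virtual $B_h$ node of residue $\mathfrak{j}$ exceeds $\gamma_1$, then $\gamma_1$ is $(1)$-connected to some $\gamma_k$ with $2\le k\le r-1$.

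For part (1): I would argue by contradiction. Suppose $\umu$ has an addable $\mathfrak{j}$-node $\theta$ with $\eta_1 <_{\bf s} \theta$. Since $\eta_1$ is normal, the word of addable/removable $\mathfrak{j}$-nodes of $\umu$ (read increasingly for $\prec_{\bf s}$) has $\eta_1$ survive the cancellation of $RA$-pairs; an addable node strictly above the maximal normal removable node $\eta_1$ would, after cancellation, leave an addable node to the \emph{right} of $\eta_1$, so $\eta_1$ could not be normal — more precisely one checks using \S\ref{Nord}.4 and \ref{per} that an addable node consecutive-or-above the topmost normal node forces a period in $\umu$, contradicting $\umu\in\Uglov{e}{\bf s}$. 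Alternatively, and more cleanly, one pulls $\theta$ back through $\Psi^e_{{\bf s}'\to{\bf s}}$: the crystal-isomorphism tables in \S\ref{bij} show a node of general nature in $\{A,B_h\}$ above the image of $\eta_1$ would correspond to an addable or $B_h$ node above $\gamma_1$ in $\ulambda$; the addable case contradicts Proposition~\ref{pf}(2) directly, and the $B_h$ case is handled below. I would write this as one unified argument about \emph{general nature} (so that $A$ and $B_h$ are treated together up to the point where they diverge).

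For part (2): assume there is a non-virtual $B_h$ node $\theta$ of residue $\mathfrak{j}$ with $\eta_1 <_{\bf s}\theta$ in $\umu$, and suppose for contradiction there is also a $B_v$ node $\theta'$ (equivalently an $R$ or $B_v$ node, i.e.\ one appearing in the boundary sequence) with $\eta_1<_{\bf s}\theta'$. Translating back to $\ulambda$ via the tables of \S\ref{bij}: a $B_h$ node above $\eta_1$ forces, in $\ulambda$, a non-virtual $B_h$ node above $\gamma_1$, whence by Proposition~\ref{pf}(3) $\gamma_1$ is $(1)$-connected to some $\gamma_k$ in the interior of the sequence; removing $\gamma_1$ (the top of the period) then places us exactly in the situation of \ref{per}, where the existence of a further vertical-boundary $\mathfrak{j}$-node consecutive-or-above the image of $\eta$ yields a period in $\umu'$, contradicting $\umu'\in\Uglov{e}{\sigma.{\bf s}}$ — but $\umu'$ is an Uglov bipartition by the preceding lemma. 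Running the bound $k\le r-1$ from Proposition~\ref{pf}(3) is what guarantees the period does not ``run off the end'' and really does obstruct membership.

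\textbf{Main obstacle.} The delicate point is the bookkeeping in passing between $\ulambda$ and $\umu$: the bijection $\Psi^e_{{\bf s}\to{\bf s}'}$ is a composition of the elementary maps $\tau$ and $\sigma_1$, and while $\tau$ merely shifts natures (by $1$ in component $2$, by $2e-1$ in component $1$), the map $\sigma_1$ has the genuinely many-valued nature table of \S\ref{bij} — a $B_v/B_h$ pair in $\ulambda$ can become $R/A$, and conversely. So I must verify that for each elementary step, ``being an addable $\mathfrak{j}$-node above the topmost normal $\mathfrak{j}$-node'' and ``being a $B_h$ (resp.\ $B_v$) node above it'' are preserved in the appropriate sense — using that $\Psi^e_{{\bf s}\to\sigma_1.{\bf s}}=\Psi^\infty_{{\bf s}\to\sigma_1.{\bf s}}$ (so one may work at $e=\infty$, where each residue has only two nodes) and that, by \ref{per}, no admissible step can create a vertical-boundary $\mathfrak{j}$-node consecutive above $\eta$ without destroying Uglovness. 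This is exactly the kind of case analysis already tabulated in \cite[\S6.1.2]{J} and \ref{per}, so once set up the checking is mechanical; the real work is choosing the formulation in terms of \emph{general nature} and the boundary sequence so that the four rows of the nature table of \S\ref{Nord} and the seventeen rows of the $\sigma_1$-table collapse to a short argument.
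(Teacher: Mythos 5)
Your overall strategy --- establish the properties for the FLOTW representative via Proposition \ref{pf}, transport them along the chain of elementary crystal isomorphisms using the nature tables of \S\ref{bij}, and derive the contradiction from the period discussion of \S\ref{per} --- is exactly the paper's, and your treatment of part (2) matches its proof: the non-virtual $B_h$ node forces, back in $\ulambda$, the $(1)$-connectedness of $\gamma_1$ via Proposition \ref{pf}(3), and the $B_v$ node then produces at some intermediate stage $\ulambda[k]$ a vertical-boundary $\mathfrak{j}$-node consecutive to and greater than the image of $\gamma_1$, i.e.\ a period, contradicting Uglovness.

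Two points in your part (1) need repair. First, the ``direct'' argument is false: an addable $\mathfrak{j}$-node greater than a normal removable node does not contradict normality (read the word $R\,R\,A$: the second $R$ cancels the $A$ and the first $R$ survives as a normal node with an addable node above it); note also that $\eta_1$ is the \emph{smallest}, not the largest, of the normal nodes, which makes the assertion of part (1) stronger than what normality alone could give. Second, your pullback argument claims the addable node traces back to ``an addable or $B_h$ node above $\gamma_1$'', with the addable case killed by Proposition \ref{pf}(2) and the $B_h$ case ``handled below'' --- but a lone non-virtual $B_h$ node above $\gamma_1$ in the FLOTW bipartition is not a contradiction (it merely triggers Proposition \ref{pf}(3)). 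The mechanism the paper uses is sharper: the only row of the $\sigma_1$-table that creates an addable node where none was present is $B_v,B_h \to R,A$; hence if an addable node sits above $\eta_1$ in $\umu$ but not above $\gamma_1$ in $\ulambda$, then at some intermediate stage there are consecutive nodes of natures $B_v$ and $B_h$ above the image of $\eta_1$, and it is this simultaneous $B_v$/$B_h$ configuration that the argument for part (2) excludes. Relatedly, ``general nature'' is not preserved component-wise by the $\sigma_1$-table (the row $R,A\to B_h,B_v$ swaps the general natures of the two components), so the unified general-nature formulation you propose must track the \emph{pair} of nodes at each content rather than a single node; only the multiset of general natures at a given content is invariant.
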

 \begin{proof}
 
 We argue by contradiction and assume first that there is a non virtual $\mathfrak{j}$-node of nature  $B_h$ and a $\mathfrak{j}$-node of nature $B_v$ greater than 
 $\eta_1$.  We have a sequence of bipartitions:
 $$\ulambda[1]:=\ulambda,\ulambda[2],\ldots,\ulambda[m]:=\umu$$
 and a sequence of elements in $\mathbb{Z}^2$:
  $${\bf s}[1]:={\bf s},{\bf s}[2],\ldots,{\bf s}[m]:={\bf s}'$$
where $s[j]:=\sigma. s[j-1]$ with $\sigma=\sigma_1$ or $\tau$ and  such that $\ulambda[j]=\Psi^{e}_{{\bf s}[j-1]\to {\bf s}[j]}  (\ulambda[j-1])$ for $j=2,\ldots,m$.
 Keeping the notation of this section, by the table in \S \ref{bij}, there is a non virtual $\mathfrak{j}$-node of nature $B_h$ greater than 
 $\gamma_1$. Thus by Proposition \ref{pf}, $\gamma_1$   must be $(1)$-connected with one of the other nodes $\gamma_j$
Now, by the definition of the vertical boundary, with the node of nature $B_v$ comes two nodes on the vertical boundary with residue $\mathfrak{j}$ and $\mathfrak{j}-1$. 
  By the discussion in \S \ref{per}, there exists $k\in \{1,\ldots,m\}$, such that in 
    $\ulambda[k] \in \Uglov{e}{{\bf s}[k]}$,  we have a $\mathfrak{j}$-node $\gamma_1'$ associated to $\gamma_1$  which is $(1)$-connected with another node and such that there exist a $\mathfrak{j}$-node of the vertical boundary $\eta$ which is consecutive to $\gamma_1'$ and such that $\gamma_1 ' <_{\bf s} \eta$. By the discussion in \S \ref{per}, this is impossible for an Uglov bipartition.

 The first point follows in fact from the second: if we have such an addable node, this means that there exists $j\in \{1,\ldots,m\}$ such that 
 in $\ulambda[j]$, the associated $\mathfrak{j}$-nodes   
 $$\eta_1'<_{\bf s} \eta_2 <_{\bf s}  \ldots <_{\bf s}  \eta_N'$$ 
are such that we have 
  a sequence of two consecutive nodes of nature, respectively $B_v$ and $B_h$ greater than $\eta_1'$ (see the table in \S \ref{bij}). This is thus impossible by the above discussion.
 \end{proof}

\begin{exa}
Take ${\bf s}=(0,1)$ and $e=3$. We consider the bipartition $\ulambda=(6.1,2.2)$ which is in $\Uglov{e}{\bf s}$. Then one can compute 
 $\Psi^e_{{\bf s}\to \sigma.{\bf s}} (\ulambda)$  for $\sigma\in \widehat{\mathfrak{S}}_2$ using the algorithm  in \cite{JU2} or one can use the program given in \cite{Jgap}. We get for all $k\in \mathbb{Z}$:
$$\Psi^e_{(0,1)\to (1+3k,0)} (\ulambda)=
\left\{ \begin{array}{rcl}
(5.2.1,3) & \text{ if } k\geq  0 \\
(2.2,6.1) & \text{ if } k=-1  \\
 (2.1,6.1.1) & \text{ if } k< -1 \\
\end{array}
\right.\ \ 
\Psi^e_{(0,1)\to (0,1+3k)} (\ulambda)=
\left\{ \begin{array}{rcl}
(3,5.2.1) & \text{ if } k> 1 \\
 (6.1.1,2.1) & \text{ if } k< 0
\end{array}
\right.
$$
We can compute the admissible sequence of  residues associated to $\ulambda$ and thus to all the bipartitions in its isomorphism class. It is given by:
$$1,0,2,2,1,1,2,0,1,1,2.$$

    \vspace{1cm}  
  
  \centerline{
 \Bigg(\  \ \ \ \ytableausetup
{mathmode}\begin{ytableau}
\none  &  \none  & \none  & \none  & \none & \none & \none &1 & \none[\dots]& \none[\dots] \\
 \none &0 & 1  & 2 & 0 & 1 & { 2}\\
\none & { 2} \\  
0  \\
\none[\vdots] \\
 \none[\vdots] \\
\end{ytableau},
\begin{ytableau}
\none  &  \none  & \none   & 1  & \none[\dots]& \none[\dots] \\
\none  &  1 & 2  \\
\none & 0  & 1\\  
1  \\
\none[\vdots] \\
 \none[\vdots] \\
\end{ytableau}\  \ \ \  \Bigg) 
}

\end{exa}

\subsection{The proof}
To prove our theorem,  we will show the following result.   Let $e\in \mathbb{Z}_{>1}\sqcup\{\infty\}$ and ${\bf s}\in \mathbb{Z}^2$. Let  $\ulambda \in\Uglov{e}{\bf s}(n)$   and denote $\operatorname{Adm} (\ulambda)=\mathfrak{i}_1,\ldots,\mathfrak{i}_n$. Then there exist   
integers $c_{\ulambda,\umu}$ for $\umu\in \mathcal{P}^2 (n)$  such that:
$$f_{\mathfrak{i}_n}\ldots f_{\mathfrak{i}_1}.\uemptyset =c_{\ulambda,\ulambda}\ulambda+\sum_{\umu\prec_{\bf s} \ulambda}  c_{\ulambda,\umu} \umu.$$
with $c_{\ulambda,\ulambda}\neq 0$.

Note that the admissible sequence of residue is by definition an invariant on the isomorphism class of an Uglov bipartition.  
it thus suffices to prove the following three properties: 
\begin{enumerate}
\item The theorem is true in the case where ${\bf s}=(s_1,s_2)\in \mathcal{S}_e$,
\item If the theorem is true for  all Uglov bipartitions $\ulambda\in \Uglov{\bf s}{e} (n)$ with 
${\bf s}=(s_1,s_2)\in \mathbb{Z}^2$ such that $s_1\leq s_2$ then it is true for all
 Uglov bipartitions $\ulambda\in \Uglov{\sigma_1.{\bf s}}{e} (n)$.
 \item If the theorem is true for  all Uglov bipartitions $\ulambda\in \Uglov{\bf s}{e} (n)$ with 
${\bf s}=(s_1,s_2)\in \mathbb{Z}^2$ such that $s_1\geq s_2$   then it is true for all
 Uglov bipartitions $\ulambda\in \Uglov{\tau.{\bf s}}{e} (n)$.
\end{enumerate}
Point $1$ is in fact a weak version of \cite[Lemma 5.7.20]{GJ} (the partial order $\ll_{\bf m}$  used in the book satisfies $\ulambda \ll_{\bf m} \umu \Rightarrow \ulambda \prec_{\bf s} \umu$). So we need to prove $2$ and $3$. To do this, 
assume that  ${\bf s}=(s_1,s_2)\in \mathbb{Z}^2$.  Let $n\in \mathbb{Z}_{>0}$.  One may assume that our result is true  for all Uglov bipartitions $\ulambda\in \Uglov{\bf s}{e} (n)$.  We show that the result is still true for   the Uglov bipartitions $\Psi^e_{{\bf s}\to \sigma_1.{\bf s}} (\ulambda)=:\ulambda^{\sigma} $ (in which case we assume $s_1\leq s_2$) and 
  $\Psi^e_{{\bf s}\to \tau.{\bf s}} (\ulambda)=:\ulambda^{\tau}$ (in which case we assume $s_1\geq s_2$). 

First assume that $\sigma\in \{\sigma_1,\tau\}$. 
  We  remove the 
 $a_k$ greatest removable $\mathfrak{j}_k$-nodes from   $\ulambda^{\sigma}$. Let 
 $\widetilde{\ulambda}^{\sigma}$ be the resulting  Uglov bipartition.  
 Assume that  $\umu^{\sigma}$  is the maximal element with respect to  $\prec_{\sigma.{\bf s}}$   appearing in 
 $f_{\mathfrak{i}_1}^{a_1}\ldots f_{\mathfrak{i}_{k}}^{a_{k}}.\uemptyset$ 
  and  that  $\umu^{\sigma} \neq \ulambda^{\sigma}$. The aim is to show a contradiction.

   By  induction, we have that  $\widetilde{\umu}^{\sigma}\prec_{\bf s} \widetilde{\ulambda}^{\sigma}$. 
    This implies that there exists $j\in \mathbb{Z}$ and a component $c\in \{1,2\}$ such that:
     \begin{itemize}
     \item The node of   $\widetilde{\ulambda}^{\sigma}$ with  content $j$ and component $c$ is of nature   $B_v$ or $B_h$,
     \item The node of   $\widetilde{\umu}^{\sigma}$  with  content $j$ and component $c$ is of nature $A$ and 
      the node of  $\umu^{\sigma}$ with   content $j$ and component $c$ is of nature $R$. 
     \item All the nodes in   ${\ulambda}^{\sigma}$  and $\umu^{\sigma}$   greater than  the one of content $j$ and component $c$
      have the same general nature.

     \end{itemize}

   \begin{abs}[{\bf Case $1$}]  The node of $\widetilde{\ulambda}^{\sigma}$  with  content $j$ and component $c$ is of  nature $B_h$.  
   
   \begin{itemize}
   \item Assume that $c=2$,  the table of natures  reads as follows:
      $$\begin{array}{|c||c|c|c|c|c|c|c|}
      \hline
      \text{Component} & \ldots  & 2& 1 & 2 &  1 & \ldots   \\
      \hline
        \text{Content} & \ldots & j-1 & j-1 &  j &  j & \ldots   \\
        \hline 
      \umu^{\sigma}  & \ldots & X_1 & X_2 &  R & X_3 & \ldots    \\
          \hline 
        \ulambda^{\sigma}  &    \ldots & Y_1 & Y_2 &  { B_h} &  Y_3   & \ldots \\
         \hline 
     \end{array}$$
As we have  $\widetilde{\umu}^{\sigma} \prec_{\sigma_1.{\bf s}}  \widetilde{\ulambda}^{\sigma}$  and because of 
 our assumptions, we must have the following table of natures:
        $$\begin{array}{|c||c|c|c|c|c|c|c|}
      \hline
      \text{Component} & \ldots  & 2& 1 & 2 &  1 & \ldots   \\
      \hline
        \text{Content} & \ldots & j-1 & j-1 &  j &  j & \ldots   \\
        \hline 
      \widetilde{\umu}^{\sigma}   & \ldots &  \left\{  \begin{array}{c} B_v \\ R \end{array}\right.  &  \left\{  \begin{array}{c} B_h \\ A \end{array}\right.  &  A &  \left\{  \begin{array}{c} B_h \\ R \end{array}\right.  & \ldots    \\
          \hline 
        \widetilde{\ulambda}^{\sigma}  &    \ldots &  \left\{  \begin{array}{c} B_h \\ A \end{array}\right.  &  \left\{  \begin{array}{c} B_v \\ R \end{array}\right.  &  { B_h} &   \left\{  \begin{array}{c} B_v \\ A \end{array}\right.   & \ldots \\
         \hline 
     \end{array}$$
     
     Assume that the  node of   $\widetilde{\ulambda}^{\sigma}$  with content $j$ in component $1$ is  of nature $A$ then the one 
      in $\widetilde{\umu}^{\sigma}$ is of nature $B_h$ and we have  $Y_3=R$ and $X_3=B_h$, we thus have   ${\umu}^{\sigma} \prec_{\sigma.{\bf s}} {\ulambda}^{\sigma}$.

    Assume that   the node of   $\widetilde{\ulambda}^{\sigma}$  with content $j$ in component $1$ is  of nature $B_v$.
    Then $Y_3=B_v$ and $X_3=R$. We immediately see that ${\umu}^{\sigma_1} \prec_{\sigma_1.{\bf s}} {\ulambda}^{\sigma_1}$.
   Let us consider the case $\sigma=\tau$. 
  By Proposition \ref{propb}, we have that the node of nature $B_h$ in component $2$ of $\widetilde{\ulambda}^{\tau}$ is virtual. Thus the node in component $1$ 
   and content $j-e$ of $\widetilde{\ulambda}$ is of nature $B_h$ and virtual.   This implies that  the node in component $1$ 
   and content $j-e$ of ${\ulambda}$ is of nature $B_h$ and virtual.

Note that we have  $s_2\geq s_1$,  so one may consider the map $\Psi^e_{\sigma_1 .{\bf s}\to {\bf s}}$ which has been explicitly described combinatorially (because then 
 $\sigma_1 .{\bf s}=:(s_2,s_1)$). 
   The algorithm for the computation of this  bijection  
 shows the following. If a  pair of consecutive nodes in components $2$ and $1$ are transformed into a pair of nodes in components $2$ and $1$ with nature 
  $X\in \{A,R,B_h,B_v\}$ and  $B_h$ (with $B_h$ virtual), this implies that the two nodes are of nature $B_h$ (with $B_h$ virtual). 
    We deduce that the node in component $1$ 
   and content $j-e$ of ${\ulambda}$ is also of nature $B_h$ and virtual. But then $Y_3\neq B_v$ and we get a contradiction.

   \item Assume that $c=1$,  the table of natures  reads as follows:
      $$\begin{array}{|c||c|c|c|c|c|c|c|}
      \hline
      \text{Component} & \ldots  & 2 & 1 & 2&  1 & \ldots   \\
      \hline
        \text{Content} & \ldots & j-1 & j-1  & j &  j & \ldots   \\
        \hline 
      \umu^{\sigma}   & \ldots & X_1 & X_2 &X_3 & R & \ldots    \\
          \hline 
        \ulambda^{\sigma}  &    \ldots & Y_1 & Y_2 & Y_3  & B_h & \ldots \\
         \hline 
     \end{array}$$
   If $X_3 \in \{B_v,R\}$ then  we immediately see that  that $\ulambda^{\sigma_1} \prec_{\sigma_1.{\bf s}} \umu^{\sigma}$. 
    If $X_3 \in \{A,B_h\}$  then if  $Y_3 \in \{A,B_h\}$, we again conclude in the same manner. We have to consider the case where 
     $Y_3 \in \{B_v,R\}$, we again conclude that $\ulambda^{\sigma_1} \prec_{\sigma_1.{\bf s}} \umu^{\sigma_1}$ (because of the table 
      giving the modification of the nature of nodes by isomorphism, the nature of the nodes of content $j$ in component 
       $1$ must be $R$ for $\umu^{\sigma_1}$ and $B_h$ for $\ulambda^{\sigma_1}$). Thus we have $\ulambda^{\sigma_1} <_{\sigma_1.{\bf s}} \umu^{\sigma_1}$.
      The result is clear for  ${\umu}^{\tau}$ and  ${\ulambda}^{\tau}$.
        
   \end{itemize}
   \end{abs}
   
   \begin{abs}[{\bf Case $2$}] The node of $\widetilde{\ulambda}^{\sigma}$  with  content $j$ and component $c$ is of  nature $B_v$.  
   \begin{itemize}
   \item Assume that $c=2$,  the node of content $j$ in component $2$ in $\umu$ is $R$ thus it implies that 
   the node of content $j-1$ in component $2$ in $\umu$ is $B_h$ or $A$. As  the node of content $j$ in component $2$ in $\ulambda$ is $B_v$ thus it implies that 
   the node of content $j-1$ in component $2$ in $\ulambda$ is $B_v$ or $R$. 
         $$\begin{array}{|c||c|c|c|c|c|c|c|}
      \hline
      \text{Component}  & \ldots  & 2& 1  & 2 &  1 & \ldots   \\
      \hline
        \text{Content} & \ldots & j-1 & j-1  & j &  j & \ldots   \\
        \hline 
      \umu^{\sigma}  & \ldots & \left\{  \begin{array}{c} B_h \\ A \end{array}\right. & X_1  & R & X_2  &  \ldots    \\
          \hline 
      \ulambda^{\sigma} &    \ldots & \left\{  \begin{array}{c} B_v \\ R \end{array}\right.  & Y_1 & B_v  & Y_2 & \ldots \\
         \hline 
     \end{array}$$

   But as $X_2$ and $Y_2$ have the same general nature, we must have $Y_1 \in \{ B_h, A\}$ 
    and $X_1 \in \{ B_v, R\}$. This implies that $Y_2 \in \{R, B_h\}$ 
     and $X_2 \in \{A,B_v\}$. But  $Y_2=R$  is impossible and $Y_2=B_h$ implies $X_2=A$ which contradicts the maximality of   $\umu^{\sigma}$.

   \item Assume that $c=1$,   again, we see that we have the following configuration:
   
            $$\begin{array}{|c||c|c|c|c|c|c|c|}
      \hline
      \text{Component} & \ldots  & 2 & 1 &  2 &  1 & \ldots   \\
      \hline
        \text{Content} & \ldots & j-1 & j-1 & j &  j & \ldots   \\
        \hline 
      \umu^{\sigma}   & \ldots & X_1 & \left\{  \begin{array}{c} B_h \\ A \end{array}\right.  &   X_2  &R &   \ldots    \\
          \hline 
         \ulambda^{\sigma} &    \ldots & Y_1& \left\{  \begin{array}{c} B_v \\ R \end{array}\right.   &Y_2&   B_v   & \ldots \\
         \hline 
     \end{array}$$
         We have that $X_2 \in \{ R, B_v\}$ and $Y_2\in \{A,B_h\}$. These two cases are not possible using the same reasoning as the case $1$ and $c=2$.       
 
      \end{itemize}

   \end{abs}
   \begin{Rem}
   Of course, one can ask if a similar result and proof can be obtained in the case of multipartitions. Most of the results presented in the last sections are still true but 
    one cannot argue as in this section to conclude. 
   
   \end{Rem}

 \subsection{Reformulation}\label{refo}
One can rephrase the main result as follows in terms of Young diagrams  (see \cite[\S 3.5.10]{GJ}).  To do this, let us introduce some more notations. Let 
 $\ulambda$ be a bipartition of $n$. 
A bijection $\mathfrak{s}: \mathcal{Y}(\ulambda) \to \{1,\ldots,n\}$ is called a  {\it $\ulambda$-tableau} and  we also say that $\ulambda$ is the shape of $\mathfrak{s}$. A
$\ulambda$-tableau  $\mathfrak{s}$  is called {\it row-standard}  if the sequence $\mathfrak{s}(a, 1,c)$,   $\mathfrak{s}(a, 2,c)$, $\ldots$  is strictly increasing for each $a$ and $c\in \{1,2\}$. 
The {\it residue sequence}  of a $\ulambda$-tableau $\mathfrak{s}$  is defined by $\eta_e (\mathfrak{s})=(\mathfrak{i}_1,\ldots,\mathfrak{i}_n)$ where $\mathfrak{i}_j$ is the residue of the node which is filled by the number $j$ in 
$\mathfrak{s}$. The theorem now becomes:
\begin{Cor}\label{djmco}
Let $e\in \mathbb{Z}_{>1}\sqcup\{\infty\}$ and ${\bf s}\in \mathbb{Z}^2$ then   $\ulambda$ is in $\Uglov{e}{\bf s}$ if and only if  for all bipartition $\umu$ admitting a row standard $\umu$-tableau $\mathfrak{s}$ such that $\eta_e (\mathfrak{s})=\operatorname{Adm} (\ulambda)$, we have $\umu\prec_{{\bf s}} \ulambda$. 
\end{Cor}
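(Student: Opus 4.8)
The plan is to reduce Corollary \ref{djmco} to Theorem \ref{djmc} by unwinding the combinatorial dictionary between the operators $f_{\mathfrak{j}}$ on the Fock space and row-standard tableaux. First I would observe that, given a bipartition $\umu$, a row-standard $\umu$-tableau $\mathfrak{s}$ with residue sequence $\eta_e(\mathfrak{s}) = (\mathfrak{i}_1,\ldots,\mathfrak{i}_n)$ amounts exactly to a way of building up $\umu$ one node at a time, adding at step $j$ a node of residue $\mathfrak{i}_j$; since $\mathfrak{s}$ is row-standard, at each stage the partial filling is the Young diagram of an honest bipartition. Conversely each such filtration $\uemptyset = \unu^{[0]} \subset \unu^{[1]} \subset \cdots \subset \unu^{[n]} = \umu$ by bipartitions, where $\mathcal{Y}(\unu^{[j]}) = \mathcal{Y}(\unu^{[j-1]}) \sqcup \{\gamma_j\}$ with $\operatorname{res}(\gamma_j) = \mathfrak{i}_j$, yields a row-standard tableau. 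Comparing with the formula for the action of $f_{\mathfrak{j}}$, this shows that for a fixed residue sequence $\mathbf{i} = (\mathfrak{i}_1,\ldots,\mathfrak{i}_n)$,
$$f_{\mathfrak{i}_n}\cdots f_{\mathfrak{i}_1}.\uemptyset \;=\; \sum_{\umu \in \mathcal{P}^2(n)} d_{\umu}\,\umu, \qquad d_{\umu} = \#\{\text{row-standard }\umu\text{-tableaux }\mathfrak{s}\text{ with }\eta_e(\mathfrak{s}) = \mathbf{i}\},$$
so in particular the set of bipartitions $\umu$ occurring with nonzero coefficient is exactly the set of $\umu$ admitting a row-standard tableau of content $\mathbf{i}$, and all coefficients are positive integers.

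Next I would prove the two implications. For the forward direction, suppose $\ulambda \in \Uglov{e}{\bf s}$ and set $\mathbf{i} = \operatorname{Adm}(\ulambda) = (\mathfrak{i}_1,\ldots,\mathfrak{i}_n)$. By the result proved in \S\ref{refo} (the statement displayed just before Corollary \ref{djmco}), we have $f_{\mathfrak{i}_n}\cdots f_{\mathfrak{i}_1}.\uemptyset = c_{\ulambda,\ulambda}\ulambda + \sum_{\umu \prec_{\bf s} \ulambda} c_{\ulambda,\umu}\umu$ with $c_{\ulambda,\ulambda} \neq 0$. In particular every bipartition $\umu \neq \ulambda$ appearing with nonzero coefficient satisfies $\umu \prec_{\bf s} \ulambda$; by the dictionary of the previous paragraph, a bipartition $\umu$ appears with nonzero coefficient if and only if it admits a row-standard $\umu$-tableau $\mathfrak{s}$ with $\eta_e(\mathfrak{s}) = \mathbf{i} = \operatorname{Adm}(\ulambda)$. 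Hence for any such $\umu$ we have $\umu \preceq_{\bf s} \ulambda$, i.e. $\umu \prec_{\bf s} \ulambda$ unless $\umu = \ulambda$, which is what the corollary asserts (noting $\ulambda$ itself admits such a tableau since $c_{\ulambda,\ulambda} \neq 0$). For the converse, suppose $\ulambda \in \mathcal{P}^2(n) \setminus \Uglov{e}{\bf s}$; I must exhibit a residue sequence and a bipartition $\umu$ with $\umu \not\prec_{\bf s} \ulambda$ admitting a row-standard tableau of that content — equivalently, by Theorem \ref{djmc}, no residue sequence makes $\ulambda$ the $\prec_{\bf s}$-maximal term with nonzero coefficient. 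This is exactly the contrapositive of the "if" direction of Theorem \ref{djmc}: if $\ulambda$ were $\prec_{\bf s}$-maximal in some $f_{\mathfrak{i}_1}\cdots f_{\mathfrak{i}_n}.\uemptyset$ then $\ulambda$ would be an Uglov bipartition, contradiction. Translating maximality back through the tableau dictionary gives precisely the failure of the condition in Corollary \ref{djmco}.

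I would expect the only genuinely delicate point to be bookkeeping the order of the operators: Theorem \ref{djmc} and the displayed statement in \S\ref{refo} use $f_{\mathfrak{i}_1}\cdots f_{\mathfrak{i}_n}$ and $f_{\mathfrak{i}_n}\cdots f_{\mathfrak{i}_1}$ respectively, while the residue sequence of a tableau is read in increasing order of the filling numbers $1,\ldots,n$; one must check that applying $f_{\mathfrak{i}_1}$ last corresponds to adding the node labelled $1$ first, so that the tableau content $(\mathfrak{i}_1,\ldots,\mathfrak{i}_n)$ matches $f_{\mathfrak{i}_n}\cdots f_{\mathfrak{i}_1}.\uemptyset$. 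Once the indexing convention is pinned down, the argument is purely formal. The substantive content — that $\operatorname{Adm}(\ulambda)$ is a residue sequence witnessing the maximality of $\ulambda$, and that maximality forces membership in $\Uglov{e}{\bf s}$ — is entirely supplied by Theorem \ref{djmc}, so the corollary is essentially a restatement; I would present it as such, citing \cite[\S 3.5.10]{GJ} for the tableau-theoretic reformulation of the $f_{\mathfrak{j}}$-action.
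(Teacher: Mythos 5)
Your reduction to Theorem \ref{djmc} is the right strategy (and is essentially all the paper does: it states the corollary as a rephrasing and points to \cite[\S 3.5.10]{GJ}), but your combinatorial dictionary contains a genuine error. You claim that ``since $\mathfrak{s}$ is row-standard, at each stage the partial filling is the Young diagram of an honest bipartition.'' This is false: row-standardness only forces the entries to increase along rows, not down columns, so the initial segments $\mathfrak{s}^{-1}(\{1,\ldots,j\})$ need not be Young diagrams. (Already for a single column of length $2$, putting $2$ on top of $1$ is row-standard.) The fillings whose initial segments are all Young diagrams are exactly the \emph{standard} tableaux, so the correct identity is that the coefficient of $\umu$ in $f_{\mathfrak{i}_n}\cdots f_{\mathfrak{i}_1}.\uemptyset$ counts \emph{standard} $\umu$-tableaux with residue sequence $\mathbf{i}$, not row-standard ones.

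This matters asymmetrically. The ``if'' direction of the corollary survives: the hypothesis quantifies over the larger set of bipartitions with a row-standard tableau of content $\operatorname{Adm}(\ulambda)$, hence a fortiori over those with a standard one, i.e.\ over the support of $f_{\mathfrak{i}_n}\cdots f_{\mathfrak{i}_1}.\uemptyset$, and the Lemma of \S\ref{cb} applies. But the ``only if'' direction does not follow from Theorem \ref{djmc} by your argument: the theorem controls only the bipartitions $\umu$ admitting a \emph{standard} tableau of content $\operatorname{Adm}(\ulambda)$, and a priori there could be a $\umu$ with $\umu\not\prec_{\bf s}\ulambda$ admitting a row-standard but no standard tableau of that content. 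Closing this gap requires the straightening-type lemma behind \cite[\S 3.5.10]{GJ}: a row-standard $\umu$-tableau with residue sequence $\mathbf{i}$ yields a standard $\unu$-tableau with the same residue sequence for some $\unu$ with $\umu\preceq_{\bf s}\unu$ (together with the compatibility of that combinatorial ``straightening'' order with $\preceq_{\bf s}$). You cite \cite[\S 3.5.10]{GJ} only for the $f_{\mathfrak{j}}$-action, which is precisely the part you state incorrectly; the citation is actually needed for this row-standard-to-standard comparison, which is the one non-formal ingredient of the corollary.
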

 
 \section{Consequences}
We quickly show how we obtain an application on the computation of canonical bases (this application is exactly the same as the one presented in \cite{J}). 
Let ${\bf s}:=(s_1,s_2)\in \mathbb{Z}^2$ and  $\ulambda \in  \Uglov{e}{\bf s}$, consider the associated admissible residue sequence:
$$\operatorname{Adm} (\ulambda)=\underbrace{\mathfrak{j}_1,\ldots,\mathfrak{j_{1}}}_{a_1},\ldots, 
\underbrace{\mathfrak{j}_k,\ldots,\mathfrak{j}_{k}}_{a_k},$$
where $\mathfrak{j}_{m}\in  I$ and $a_m\in \mathbb{Z}_{>0}$ for all 
 $m\in \{1,\ldots,k\}$ and where we assume that $\mathfrak{j}_m \neq \mathfrak{j}_{m+1}$ for all $s\in \{1,\ldots,k-1\}$. 
 We have now to work on the quantum group  $\mathcal{U}_v$ of  affine type $A$  (which can be seen as a deformation of $\mathfrak{g}_e$). It acts on the Fock space (where $v$ is an indeterminate). 
 With the same proof as \cite[Thm 6.4.2]{GJ}, we obtain that 
 $$f_{\mathfrak{j}_1}^{(a_1)}\ldots f_{\mathfrak{j}_k}^{(a_k)}\emptyset =\ulambda + 
 \sum_{\umu\prec_{\bf s} \ulambda}  c_{\ulambda,\umu} (v) \umu,$$
for Laurent polynomials  $c_{\ulambda,\umu} (v)$ and where  the $f_{\mathfrak{i}}^{(a)}$'s for $a\in \mathbb{Z}_{>0}$  stand for the divided powers of the Chevalley operators (see \cite{GJ}).   The specialization at $v=1$ of the above expression corresponds to the elements of the Theorem. As a consequence, 
as in \cite{J}, we obtain a LLT algorithm-like for the computation of the  canonical basis elements. 

One can also deduce from that the existence of basic sets for Hecke algebras of type $B_n$. We refer to \cite{GJ} for motivations and results around this theory.  
 Let $u$ be an indeterminate, let $V_1$ and $V_2$ be two indeterminates, let $R$ be a commutative ring with unit such that $\mathbb{Z} \subset R \subset \mathbb{C}$ and let $A:=R[u^{\pm 1},V_1,V_2]$.  Let $K$ be the field of fractions of $A$
let ${\bf H}_n$ be the Hecke algebra of type $B_n$ over $A$ with generators $\{T_i\ |\ i=0,\ldots,n-1\}$ and relations: 
\begin{itemize}
\item $(T_i-u)(T_i+1)=0$ for $i=1,\ldots,n-1$,
\item $(T_0-V_1)(T_0-V_2)=0$,
\item the type $B$ braid relations: $T_i T_j=T_j T_i$ with $|i-j|>1$, $T_i T_{i+1}T_i=T_{i+1} T_i T_{i+1}$ for $1\leq i\leq n-2$, and $T_0 T_1 T_0 T_1=T_1 T_0 T_1 T_0$. 
\end{itemize}
Let us denote  ${\bf H}_{K,n}= K\otimes_A {\bf H}_n $ then the simple ${\bf H}_{K,n}$-modules 
 are parametrized by the set of bipartitions. By Tits deformation theorem (see \cite[\S 1.2]{GJ}), we have:
$$\operatorname{Irr} ({\bf H}_{K,n})=\{ E^{\ulambda}\ |\ \ulambda \in \mathcal{P}^2 (n) \}$$
Let  $\theta: A \to L$ is a specialisation for a field $L$ and assume that:
$$\theta (u) \neq 1,\ \theta (V_1)=\theta (u)^{s_1}, \theta (V_2)=\theta (u)^{s_2}$$
Let $e>1$ be the order of $\theta (u) \in L^{\times}$. 
 The associated specalized  ${\bf H}_{L,n}= L\otimes_A {\bf H}_n $ is non semisimple in general and the representation theory is controled 
  by the decomposition matrix.

 For each $\ulambda \in  \Uglov{e}{\bf s}$, one can define the representation of ${\bf H}_{K,n}$:  
 $$P_{\ulambda}= E^\ulambda\oplus \bigoplus_{\umu\prec_{\bf s} \ulambda}  c_{\ulambda,\umu} E^\umu.$$
 The set of representations 
 $$\{ P_{\ulambda} \ |\ \ulambda \in  \Uglov{e}{\bf s}\}$$
satisfies the hypotheses of  \cite[Prop. 3.4.5]{GJ}. This implies that ${\bf H}_{k,n}$ 
 admits a basic set which is given by:
  $$\{ E_{\ulambda} \ |\ \ulambda \in  \Uglov{e}{\bf s}\}$$
 with respect to the partial order $\prec_{\bf s}$. 
 This means that the associated decomposition matrix is lower unitriangular and thus that $\Uglov{e}{\bf s} (n)$ is a natural parametrization set for the simple 
 ${\bf H}_{L,n}$-modules.  Note that this result is independant of the characteristic.  We refer to \cite{Bo} for an analogous result 
  obtained by a completely different process.

%
%

\vspace{0.5cm}
{\bf Address:}\\
\textsc{Nicolas Jacon}, Universit\'e de Reims Champagne-Ardenne, UFR Sciences exactes et naturelles, Laboratoire de Math\'ematiques EA 4535
Moulin de la Housse BP 1039, 51100 Reims, FRANCE\\  \emph{nicolas.jacon@univ-reims.fr}\\

\end{document}